\theoremstyle{plain}
\newtheorem{theorem}{Theorem}
\newtheorem{lemma}{Lemma}
\newtheorem{corollary}{Corollary}
\theoremstyle{definition}
\newtheorem{definition}{Definition}
\begin{document}

\title{Equivariant Fibrations} 

\author{Pavel S. Gevorgyan}

\address{Moscow State Pedagogical University}

\email{pgev@yandex.ru}

\subjclass {55R05, 55R65, 55R91}

\keywords{equivariant covering homotopy, equivariant fibration, equivariant homotopy, $H$-fixed point, $H$-orbit space, normal $G$-covering, weakly locally trivial $G$-fibration.}

\begin{abstract}
In this paper, we study equivariant Hurewicz fibrations, obtain their internal characteristics, and prove theorems on relationship between equivariant fibrations and fibrations generated by them. Local and global properties of equivariant fibrations are examined. An equivariant analog of the Hurewicz theorem on passing from local fibrations to global fibrations is proved. A classification of equivariant fibrations with the property of uniqueness of a covering path is given.
\end{abstract}

\maketitle


\section{Introduction}
Let \( G \) be a topological group, \( E \), \( B \), and \( X \) be arbitrary \( G \)-spaces, and \( p : E \to B \) and \( f : X \to B \) be some equivariant mappings. The problem of lifting an equivariant mapping \( f \) to \( E \) consists of the search for an equivariant mapping \( \tilde{f} : X \to E \) such that \( f = p \circ \tilde{f} \). For this problem to become a well-posed problem of \( G \)-homotopy category, the mapping \( p : E \to B \) must satisfy an additional condition.

We say that an equivariant mapping \( p : E \to B \) possesses the property of \textit{equivariant covering homotopy} with respect to a \( G \)-space \( X \) if for arbitrary equivariant mappings \( \tilde{f} : X \to E \) and \( F : X \times I \to B \) such that \( F \circ i_0 = p \circ \tilde{f} \), where \( i_0 : X \to X \times I \) is the embedding defined by the formula \( i_0(x) = (x, 0) \), \( x \in X \), there exists an equivariant mapping \( \tilde{F} : X \times I \to E \) such that the following diagram is commutative:
$$
\xymatrix
{
X \ar[rr]^{\tilde{f}} \ar[d]_{i_0} & & E \ar[d]^{p} \\
X\times I \ar[rr]_{F} \ar@{-->}[rru]^{\tilde{F}} & & B 
}
$$
i.e.  $p\circ \tilde{F} =F$ and $\tilde{F}\circ i_0=\tilde{f}$. 

If \( p : E \to B \) possesses the property of equivariant covering homotopy with respect to a \( G \)-space \( X \) and \( f, g : X \to B \) be \( G \)-homotopic equivariant mappings, then \( f \) can be lifted to \( E \) if and only if \( g \) can be lifted to \( E \). In other words, the lift of an equivariant mapping \( f : X \to B \) is a property of the \( G \)-homotopic class of this mapping.

The property of equivariant covering homotopy leads to the concept of equivariant fibration. An equivariant mapping \( p : E \to B \) is called an \textit{equivariant Hurewicz fibration} or a \textit{Hurewicz \( G \)-fibration} if \( p \) possesses the property of equivariant covering homotopy with respect to any \( G \)-space \( X \).

In this paper, we examine equivariant Hurewicz fibrations. Using equivariant covering functions and extended equivariant covering functions, we obtain inner characteristics of these fibrations (see Theorems~\ref{th-1} and~\ref{th-1-1}). We establish relationships of \( G \)-fibrations with generated fibrations (Theorems~\ref{th-5-0}, \ref{th-5}, and \ref{th-5-1} and Corollaries~\ref{cor-1} and~\ref{cor-2}) and examine their local and global properties. We prove an equivariant analog of Hurewicz's theorem on the equivalence of local and global \( G \)-fibrations in the case of paracompact base (Theorem~\ref{th-4}). We introduce the concept of locally trivial \( G \)-fibration (Definition~\ref{def-1}) and show that the class of such fibrations is wider than the class of locally trivial \( G \)-fibration, but in narrower than the class of Hurewicz \( G \)-fibrations in the case of paracompact base (Theorem~\ref{th-7}). We prove that under some additional condition for the base of a fibration, the classes of weakly locally trivial \( G \)-fibrations and Hurewicz \( G \)-fibrations coincide (Theorem~\ref{th-3}). Finally, we propose a classification of \( G \)-fibrations with the property of uniqueness of a covering paths (Theorem~\ref{th-6}). 

Concepts, definitions, and results from equivariant topology and fibration theory used in this paper can be found in \cite{james-segal,May-Sig,Spener,tom Dieck_1}.


\section{\texorpdfstring{\( G \)-Fibrations}{G-Fibrations} and their inner characteristics}

Let \( G \) be a compact topological group, \( E \) and \( B \) be \( G \)-spaces, and \( p : E \to B \) be an equivariant mapping.

The action of the group \( G \) on \( B \) induces the natural action of the group \( G \) on the path space \( B^I \) by the formula \((g\alpha)(t) = g\alpha(t)\) for arbitrary \( g \in G \), \(\alpha \in B^I\), and \( t \in I \). Consider the subspace \(\Delta \subset E \times B^I\) defined by the formula
\[
\Delta = \{(e,\alpha) \in E \times B^I \mid \alpha(0) = p(e)\}.
\]
Obviously, \(\Delta\) is an invariant subset of the \( G \)-space \( E \times B^I \).

\begin{definition}
An equivariant mapping \(\lambda : \Delta \to E^I\) satisfying the conditions
\[
\lambda(e,\alpha)(0) = e, \quad [p \circ \lambda(e,\alpha)](t) = \alpha(t),
\]
is called an \textit{equivariant covering function} or a \textit{covering \( G \)-function} for \( p \).
\end{definition}

The following theorem states the relationship between equivariant Hurewicz fibrations and equivariant covering functions. It provides an inner characteristic of an equivariant Hurewicz fibration.

\begin{theorem}\label{th-1}
An equivariant mapping \( p : E \to B \) is a Hurewicz \( G \)-fibration if and only if \( p \) possesses an equivariant covering function.
\end{theorem}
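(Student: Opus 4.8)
The plan is to mimic the classical (non-equivariant) proof due to Hurewicz, checking at each step that all maps constructed are equivariant. The statement is an ``if and only if,'' so there are two directions.

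For the direction ``covering $G$-function $\Rightarrow$ Hurewicz $G$-fibration,'' suppose $\lambda : \Delta \to E^I$ is an equivariant covering function for $p$, and suppose we are given equivariant maps $\tilde f : X \to E$ and $F : X \times I \to B$ with $F \circ i_0 = p \circ \tilde f$. From $F$ I would form its adjoint $\hat F : X \to B^I$, $\hat F(x)(t) = F(x,t)$; since $G$ is compact (hence the exponential law behaves well and the adjoint of an equivariant map is equivariant with respect to the induced action on $B^I$), $\hat F$ is equivariant. The compatibility $F \circ i_0 = p \circ \tilde f$ says exactly that $(\tilde f(x), \hat F(x)) \in \Delta$ for every $x$, so $x \mapsto (\tilde f(x), \hat F(x))$ is an equivariant map $X \to \Delta$. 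Composing with $\lambda$ and taking the adjoint back, define $\tilde F : X \times I \to E$ by $\tilde F(x,t) = \lambda(\tilde f(x), \hat F(x))(t)$. The two defining properties of $\lambda$ give $\tilde F(x,0) = \lambda(\tilde f(x),\hat F(x))(0) = \tilde f(x)$ and $p(\tilde F(x,t)) = [p \circ \lambda(\tilde f(x), \hat F(x))](t) = \hat F(x)(t) = F(x,t)$, i.e. $\tilde F \circ i_0 = \tilde f$ and $p \circ \tilde F = F$; and $\tilde F$ is equivariant as a composite of equivariant maps. Hence $p$ has the equivariant covering homotopy property with respect to every $G$-space $X$.

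For the converse, ``Hurewicz $G$-fibration $\Rightarrow$ covering $G$-function,'' the standard trick is to test the covering homotopy property against the ``universal'' test space, namely $X = \Delta$ itself. Take $\tilde f : \Delta \to E$ to be the projection $(e,\alpha) \mapsto e$, and $F : \Delta \times I \to B$ to be $F((e,\alpha),t) = \alpha(t)$; both are equivariant for the given/induced actions, and $F((e,\alpha),0) = \alpha(0) = p(e) = p(\tilde f(e,\alpha))$, so the hypothesis $F \circ i_0 = p \circ \tilde f$ holds. Since $p$ is a Hurewicz $G$-fibration, there is an equivariant $\tilde F : \Delta \times I \to E$ with $p \circ \tilde F = F$ and $\tilde F \circ i_0 = \tilde f$. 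Define $\lambda : \Delta \to E^I$ by $\lambda(e,\alpha)(t) = \tilde F((e,\alpha),t)$, i.e. the adjoint of $\tilde F$. Then $\lambda(e,\alpha)(0) = \tilde F((e,\alpha),0) = \tilde f(e,\alpha) = e$ and $[p \circ \lambda(e,\alpha)](t) = p(\tilde F((e,\alpha),t)) = F((e,\alpha),t) = \alpha(t)$, which are exactly the two conditions in the definition of an equivariant covering function; and $\lambda$ is equivariant because $\tilde F$ is.

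The only genuinely delicate point — and the step I expect to be the main obstacle — is the handling of the induced $G$-action on function spaces and the equivariance of adjunction. One must verify that, with the action $(g\alpha)(t) = g\alpha(t)$ on $B^I$ (and likewise on $E^I$), a map $\phi : Y \to Z^I$ is equivariant if and only if its adjoint $\Phi : Y \times I \to Z$ (with $I$ carrying the trivial $G$-action) is equivariant, and that $\Delta$ with the subspace action is genuinely an invariant subspace (already noted in the excerpt). This is where compactness of $G$ is used to guarantee that $Z^I$ with the compact-open topology is well-behaved and the exponential law $Z^{Y \times I} \cong (Z^I)^Y$ holds as a homeomorphism of $G$-spaces; granting that, everything else is a routine diagram chase identical to the non-equivariant case.
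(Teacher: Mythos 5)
Your proposal is correct and follows essentially the same route as the paper's own proof: in the forward direction you lift via $\tilde F(x,t)=\lambda(\tilde f(x),F_x)(t)$, and in the converse you test the equivariant covering homotopy property against the universal space $X=\Delta$ with the projection and evaluation maps, exactly as the paper does. Your extra attention to the equivariance of the exponential-law adjunction only makes explicit what the paper leaves as routine verification.
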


\textbf{Proof.} Let \(\lambda : \Delta \to E^I\) be an equivariant covering function for \(p\). Consider an arbitrary \(G\)-space \(X\), an arbitrary equivariant mapping \(\tilde{f} : X \times 0 \to E\), and an arbitrary equivariant homotopy \(F : X \times I \to B\), \(F(x, 0) = p(\tilde{f}(x, 0))\). Note that for arbitrary \(x \in X\), the formula \(F_x(t) = F(x, t)\) defines a path \(F_x \in B^I\). It is easy to see that the homotopy \(\tilde{F} : X \times I \to E\) defined by the formula \(\tilde{F}(x, t) = \lambda(\tilde{f}(x, 0), F_x)(t)\) is the required equivariant covering homotopy.

Conversely, assume that \(p : E \to B\) is a Hurewicz \(G\)-fibration. Let \(X = \Delta\). We consider the mappings
\[
\tilde{f} : \Delta \times 0 \to E, \quad \tilde{f}[(e, \alpha), 0] = e, \quad F : \Delta \times I \to B, \quad F[(e, \alpha), t] = \alpha(t).
\]
One can easily verify that \(\tilde{f}\) and \(F\) are equivariant mappings and
\[
F[(e, \alpha), 0] = \alpha(0) = p(e) = (p \circ \tilde{f})[(e, \alpha), 0].
\]
Therefore, there exists an equivariant covering homotopy \(\tilde{F} : \Delta \times I \to E\) for the homotopy \(F\):
\[
\tilde{F}[(e, \alpha), 0] = \tilde{f}, \quad p \circ \tilde{F} = F.
\]
Now we introduce the mapping
\[
\lambda : \Delta \to E^I, \quad \lambda(e, \alpha)(t) = \tilde{F}[(e, \alpha), t].
\]
It is easy to see that \(\lambda\) is an equivariant covering function for \(p\).

Let \(W \subset B^I\) be an invariant subset of the \(G\)-space \(B^I\). Consider the set \(\Delta_W \subset E \times B^I \times I\) defined by the formula
\[
\Delta_W = \{(e, \alpha, s) \in E \times W \times I \mid \alpha(s) = p(e)\}.
\]
Obviously, \(\Delta_W\) is an invariant subset of the \(G\)-space \(E \times B^I \times I\).

\begin{definition}\label{def-ext}
An equivariant mapping \(\Lambda : \Delta_W \to E^I\) satisfying the conditions
\[
\Lambda(e, \alpha, s)(s) = e, \quad [p \circ \Lambda(e, \alpha, s)](t) = \alpha(t),
\]
is called an \textit{extended equivariant covering function} or \textit{extended covering \( G \)-function} over \(W\).
\end{definition}

The following lemma shows that the existence of an extended covering \( G \)-function over \( B^I \) is equivalent to the existence of an equivariant covering function.

\begin{lemma}
An equivariant mapping \( p : E \to B \) possesses an equivariant covering function if and only if there exists an extended covering \( G \)-function over \( B^I \).
\end{lemma}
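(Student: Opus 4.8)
The plan is to prove the two implications separately. The implication from an extended covering $G$-function to an ordinary one is immediate: given $\Lambda:\Delta_{B^{I}}\to E^{I}$ satisfying the conditions of Definition~\ref{def-ext} with $W=B^{I}$, I would set $\lambda(e,\alpha):=\Lambda(e,\alpha,0)$. Since $(e,\alpha)\in\Delta$ means precisely $\alpha(0)=p(e)$, i.e. $(e,\alpha,0)\in\Delta_{B^{I}}$, and since $(e,\alpha)\mapsto(e,\alpha,0)$ is an equivariant embedding $\Delta\hookrightarrow\Delta_{B^{I}}$, the map $\lambda$ is equivariant, $\lambda(e,\alpha)(0)=\Lambda(e,\alpha,0)(0)=e$, and $p\circ\lambda(e,\alpha)=p\circ\Lambda(e,\alpha,0)=\alpha$; hence $\lambda$ is an equivariant covering function for $p$.

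For the converse, suppose $p$ has an equivariant covering function (equivalently, by Theorem~\ref{th-1}, $p$ is a Hurewicz $G$-fibration). The idea is a fold-and-reparametrise construction. Given $(e,\alpha,s)\in\Delta_{B^{I}}$, split $\alpha$ at the parameter $s$ into its final part $\alpha^{+}_{s}(u)=\alpha(s+u(1-s))$, which starts at $\alpha(s)=p(e)$, and its initial part $\alpha^{-}_{s}(u)=\alpha(us)$, which \emph{ends} at $\alpha(s)=p(e)$. Using a covering function $\lambda$ one lifts $\alpha^{+}_{s}$ forward from $e$, and using the associated \emph{terminal} covering function $\mu(e,\beta)(t):=\lambda(e,\bar\beta)(1-t)$ (with $\bar\beta(t)=\beta(1-t)$), which lifts paths \emph{ending} at $p(e)$, one lifts $\alpha^{-}_{s}$ so that it terminates at $e$; then one reparametrises and concatenates:
\[
\Lambda(e,\alpha,s)(t)=
\begin{cases}
\mu\bigl(e,\alpha^{-}_{s}\bigr)\!\left(\dfrac{t}{s}\right), & 0\le t\le s,\\[8pt]
\lambda\bigl(e,\alpha^{+}_{s}\bigr)\!\left(\dfrac{t-s}{1-s}\right), & s\le t\le 1,
\end{cases}
\]
with the convention that only the second line (resp.\ only the first line) is used when $s=0$ (resp.\ $s=1$). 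The two branches agree at $t=s$, both equal to $e$ because $\mu(e,\alpha^{-}_{s})(1)=e$ and $\lambda(e,\alpha^{+}_{s})(0)=e$; a direct substitution gives $\Lambda(e,\alpha,s)(s)=e$ and $p\circ\Lambda(e,\alpha,s)=\alpha$; and $\Lambda$ is equivariant because $\lambda$ and $\mu$ are, $(g\alpha)^{\pm}_{s}=g(\alpha^{\pm}_{s})$, and the reparametrisations do not involve $G$.

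The step I expect to be the real obstacle is the continuity of $\Lambda$. The maps $(e,\alpha,s)\mapsto\alpha^{\pm}_{s}$ are continuous into $B^{I}$ and $\lambda,\mu$ are continuous, so $\Lambda$ is continuous for $s\in(0,1)$; but as $s\to0$ or $s\to1$ one of the two branches degenerates — its domain $[0,s]$ (resp.\ $[s,1]$) collapses to a point while the path $\alpha^{\mp}_{s}$ it lifts tends to the constant path $c_{p(e)}$ — and $\Lambda$ will in general fail to be continuous there unless $\mu(e,c_{p(e)})$, equivalently $\lambda(e,c_{p(e)})$, is itself the constant path $c_{e}$. So the construction has to be preceded by replacing the given covering function by a \emph{regular} one, i.e. one with $\lambda(e,c_{p(e)})=c_{e}$ for every $e\in E$; the existence of a regular equivariant covering function for a Hurewicz $G$-fibration is the technical heart of the argument, obtained by the standard non-equivariant reasoning, which carries over since $G$ is compact and all the maps used are natural. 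Once $\lambda$ is regular, the limiting value of the degenerating branch is forced to be $e$, continuity at $s\in\{0,1\}$ follows, and $\Lambda$ is the sought extended covering $G$-function over $B^{I}$.
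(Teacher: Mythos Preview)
The paper gives no proof of this lemma; it simply says the argument ``is similar to that in the nonequivariant case'' and refers to Spanier, so there is nothing detailed to compare against.

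Your direction $\Lambda\Rightarrow\lambda$ is correct. For the converse, the split-at-$s$ idea is right and your diagnosis of the obstruction is accurate: with the rescalings $t/s$ and $(t-s)/(1-s)$, continuity at $s\in\{0,1\}$ really does force $\lambda(e,c_{p(e)})=c_e$. But the detour through a regular covering $G$-function is the weakest link in your write-up --- you only assert that the nonequivariant proof of regularity ``carries over'', and that proof (via the relative covering homotopy property for the $G$-cofibration $\{(e,c_{p(e)})\}\hookrightarrow\Delta$) is itself a nontrivial ingredient you have not supplied. The complication is avoidable. Instead of rescaling the two half-paths to $[0,1]$, \emph{pad} them by a constant: set $\beta^{+}_{s}(u)=\alpha(\min(s+u,1))$ and $\beta^{-}_{s}(u)=\alpha(\max(s-u,0))$, both defined on all of $I$ with $\beta^{\pm}_{s}(0)=\alpha(s)=p(e)$, and put
\[
\Lambda(e,\alpha,s)(t)=\begin{cases}\lambda(e,\beta^{-}_{s})(s-t),& 0\le t\le s,\\[2pt] \lambda(e,\beta^{+}_{s})(t-s),& s\le t\le 1.\end{cases}
\]
The two branches agree at $t=s$ (both return $e$), one checks $p\circ\Lambda(e,\alpha,s)=\alpha$ and $\Lambda(e,\alpha,s)(s)=e$, and equivariance is immediate since $(g\alpha)^{\pm}_{s}=g\beta^{\pm}_{s}$. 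Now nothing degenerates: as $s\to0$ (resp.\ $s\to1$) the argument $s-t$ (resp.\ $t-s$) on the shrinking piece tends to $0$, where $\lambda(\cdot,\cdot)(0)$ always returns $e$; hence the adjoint $\Delta_{B^I}\times I\to E$ is continuous on each of the closed pieces $\{t\le s\}$ and $\{t\ge s\}$ and they match on the overlap. This produces a continuous equivariant $\Lambda$ directly from an \emph{arbitrary} covering $G$-function, with no regularity hypothesis, and is the sort of construction the paper's citation is pointing to.
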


The proof of this lemma is similar to that in the nonequivariant case (see \cite[Lemma 9, p. 124]{Spener}).

Theorem 1 and Lemma 1 immediately imply the following assertions.

\begin{theorem}\label{th-1-1}
An equivariant mapping \( p : E \to B \) is a Hurewicz \( G \)-fibration if and only if there exists an extended covering \( G \)-function over \( B^I \).
\end{theorem}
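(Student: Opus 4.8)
The plan is to derive Theorem~\ref{th-1-1} by simply chaining together the two results already in hand: Theorem~\ref{th-1}, which says that $p$ is a Hurewicz $G$-fibration if and only if $p$ possesses an equivariant covering function, and Lemma~1, which says that $p$ possesses an equivariant covering function if and only if there exists an extended covering $G$-function over $B^I$. The logical equivalence we want, ``$p$ is a Hurewicz $G$-fibration $\iff$ there exists an extended covering $G$-function over $B^I$,'' is then immediate by transitivity of the biconditional. So the proof is essentially one line, and the sentence ``Theorem 1 and Lemma 1 immediately imply the following assertions'' in the excerpt is already doing the work.

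Concretely, I would argue in two directions. First, suppose $p : E \to B$ is a Hurewicz $G$-fibration. By Theorem~\ref{th-1}, $p$ admits an equivariant covering function $\lambda : \Delta \to E^I$. By Lemma~1 applied with $W = B^I$, the existence of such a $\lambda$ yields an extended covering $G$-function $\Lambda : \Delta_{B^I} \to E^I$. Conversely, if there exists an extended covering $G$-function over $B^I$, then Lemma~1 (again with $W = B^I$) produces an equivariant covering function for $p$, and Theorem~\ref{th-1} then shows that $p$ is a Hurewicz $G$-fibration. This completes the equivalence.

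There is essentially no obstacle here, since all the substantive content — the construction of the covering homotopy from a covering function and back, and the passage between $\lambda$ on $\Delta$ and $\Lambda$ on $\Delta_{B^I}$ via reparametrization of paths (which works exactly as in the nonequivariant case, using that the $G$-action on $B^I$ is defined pointwise and hence commutes with restriction and reparametrization of the path coordinate) — has already been absorbed into Theorem~\ref{th-1} and Lemma~1. The only point worth a remark is that in invoking Lemma~1 one must take $W = B^I$, which is indeed an invariant subset of $B^I$, so that $\Delta_W = \Delta_{B^I}$ and the statements match up; this is a triviality. Hence the proof reads:

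\textbf{Proof.} By Theorem~\ref{th-1}, $p : E \to B$ is a Hurewicz $G$-fibration if and only if $p$ possesses an equivariant covering function. By Lemma~1 (with $W = B^I$), $p$ possesses an equivariant covering function if and only if there exists an extended covering $G$-function over $B^I$. Combining these two equivalences gives the assertion. $\square$
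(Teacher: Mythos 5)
Your proposal is correct and is exactly how the paper handles it: the statement is obtained by chaining the equivalence of Theorem~\ref{th-1} with Lemma~1 applied with $W = B^I$, which is what the paper means by ``Theorem 1 and Lemma 1 immediately imply the following assertions.'' No further argument is needed.
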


Theorems \ref{th-1} and \ref{th-1-1} are of great practical importance; in what follows, they will be applied to prove that the given equivariant mapping is a Hurewicz \( G \)-fibration.


\section{\texorpdfstring{\( G \)-Fibrations}{G-Fibrations} and fibrations generated by them}

Let \( H \) be a closed subgroup of a compact group \( G \). Then any \( G \)-space is also an \( H \)-space and any \( G \)-mapping is an \( H \)-mapping. Thus, there exists a natural covariant functor from the category \( G \)-TOP into the category \( H \)-TOP. The following natural question arises: Does this functor preserve the property of an equivariant mapping to be an equivariant Hurewicz fibration? The answer is given by the following theorem.

\begin{theorem}\label{th-5-0}
Let a \( G \)-mapping \( p : E \to B \) be a Hurewicz \( G \)-fibration. Then for an arbitrary closed subgroup \( H \) of the compact group \( G \), the mapping \( p : E \to B \) is a Hurewicz \( H \)-fibration.
\end{theorem}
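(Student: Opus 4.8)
The plan is to exploit the inner characterization of Hurewicz fibrations given by Theorem~\ref{th-1}. Since $p : E \to B$ is a Hurewicz $G$-fibration, it possesses an equivariant covering function $\lambda : \Delta \to E^I$, where $\Delta = \{(e,\alpha) \in E \times B^I \mid \alpha(0) = p(e)\}$. The idea is that this very same $\lambda$, regarded now as an $H$-equivariant mapping, will serve as an $H$-covering function for $p$, which by Theorem~\ref{th-1} applied in the category $H$-TOP forces $p$ to be a Hurewicz $H$-fibration.

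First I would observe that $H$ being a closed subgroup of the compact group $G$ is itself a compact topological group, so Theorem~\ref{th-1} is applicable over $H$. Next I would check that all the relevant objects restrict correctly: the $G$-action on $B^I$ given by $(g\alpha)(t) = g\alpha(t)$ restricts to an $H$-action by the same formula; the set $\Delta$ is defined by an equation involving only $p$ (which is an $H$-map) and evaluation, so $\Delta$ is an $H$-invariant subspace of $E \times B^I$ and coincides as a space with the $\Delta$ one would form in $H$-TOP; and similarly $E^I$ carries the restricted $H$-action. Then the two defining identities $\lambda(e,\alpha)(0) = e$ and $[p\circ\lambda(e,\alpha)](t) = \alpha(t)$ are statements about points, so they persist verbatim. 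The only genuine thing to verify is that $\lambda$, known to be $G$-equivariant, is $H$-equivariant: but this is immediate since for $h \in H \subset G$ and $(e,\alpha) \in \Delta$ we have $\lambda(h\cdot(e,\alpha)) = h\cdot\lambda(e,\alpha)$ by $G$-equivariance. Hence $\lambda$ is an equivariant covering function for $p$ in the sense of $H$-TOP, and Theorem~\ref{th-1} yields the conclusion.

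Honestly, there is no real obstacle here: the statement is essentially a formal consequence of the internal characterization, and the only point deserving a moment's care is the remark that the forgetful functor $G\text{-TOP} \to H\text{-TOP}$ sends the auxiliary space $\Delta$ (and $B^I$, $E^I$) to the corresponding auxiliary space built over $H$, so that Theorem~\ref{th-1} can be invoked on the nose rather than merely in spirit. I would write the proof in two short sentences invoking Theorem~\ref{th-1} in both directions. An alternative, slightly more pedestrian route would be to argue directly from the definition of equivariant covering homotopy — given an $H$-space $X$, $H$-maps $\tilde f$ and $F$ with $F\circ i_0 = p\circ\tilde f$, one would need to produce an $H$-covering homotopy $\tilde F$ — but this cannot be done by merely ``forgetting structure'' on a $G$-covering homotopy, since $X$ need not be a $G$-space; so the covering-function approach via Theorem~\ref{th-1} is the clean one and is the route I would take.
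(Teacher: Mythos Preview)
Your proposal is correct and matches the paper's own proof essentially line for line: the paper also invokes Theorem~\ref{th-1} to obtain a covering $G$-function $\lambda : \Delta \to E^I$, observes that $\lambda$ is an $H$-map and hence a covering $H$-function, and applies Theorem~\ref{th-1} again. Your additional remarks (that $\Delta$, $B^I$, $E^I$ coincide as spaces under the forgetful functor, and that the direct lifting-property approach would fail since an $H$-space need not be a $G$-space) are accurate elaborations that the paper leaves implicit.
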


\begin{proof}
By Theorem \ref{th-1}, there exists a covering \( G \)-function \( \lambda : \Delta \to E^I \) for the \( G \)-mapping \( p \). Since \( \lambda : \Delta \to E^I \) is also an \( H \)-mapping, \( \lambda \) is a covering \( H \)-function for the \( H \)-mapping \( p \). Therefore, \( p : E \to B \) is a Hurewicz \( H \)-fibration by Theorem \ref{th-1}.
\end{proof}

In particular, Theorem \ref{th-5-0} implies the following assertion.

\begin{corollary}\label{cor-1}
An arbitrary Hurewicz \( G \)-fibration \( p : E \to B \) is a Hurewicz fibration.
\end{corollary}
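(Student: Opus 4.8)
The statement to prove is Corollary~\ref{cor-1}: every Hurewicz $G$-fibration $p : E \to B$ is a Hurewicz fibration (in the ordinary, nonequivariant sense).

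My plan is to derive this from Theorem~\ref{th-5-0} by specializing the closed subgroup $H$. The key observation is that the trivial subgroup $H = \{e\}$ is a closed subgroup of the compact group $G$ (singletons are closed in Hausdorff groups, and one works in the category of Hausdorff $G$-spaces throughout this paper). An $\{e\}$-space is just a topological space with no constraint, an $\{e\}$-mapping is just a continuous map, and the property of equivariant covering homotopy with respect to $\{e\}$-spaces is precisely the ordinary covering homotopy property with respect to all topological spaces. Hence a Hurewicz $\{e\}$-fibration is exactly a Hurewicz fibration in the classical sense.

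The steps, in order, are: first, note that $\{e\}$ is a closed subgroup of $G$; second, apply Theorem~\ref{th-5-0} with $H = \{e\}$ to conclude that $p : E \to B$ is a Hurewicz $\{e\}$-fibration; third, unwind the definitions to identify the notion of Hurewicz $\{e\}$-fibration with that of an ordinary Hurewicz fibration — under the trivial action, the equivariance conditions on $\tilde f$, $F$, and $\tilde F$ are vacuous, the diagram in the definition of equivariant covering homotopy becomes the usual covering homotopy diagram, and "for any $\{e\}$-space $X$" means "for any space $X$". Putting these together yields the claim.

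I do not anticipate a genuine obstacle here; the content is entirely in Theorem~\ref{th-5-0}, and the corollary is a one-line specialization. The only point requiring a word of care is the standing hypothesis that $G$ is a compact (hence Hausdorff) topological group, which guarantees that $\{e\}$ is indeed closed so that Theorem~\ref{th-5-0} applies; this is already in force throughout the section. Thus the proof is simply: apply Theorem~\ref{th-5-0} to the closed subgroup $H = \{e\}$.
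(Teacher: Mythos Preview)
Your proof is correct and follows exactly the paper's approach: the corollary is obtained directly from Theorem~\ref{th-5-0} by taking $H=\{e\}$, and your remarks about why $\{e\}$ is closed and why an $\{e\}$-fibration is an ordinary Hurewicz fibration simply make explicit what the paper leaves implicit.
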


Let \( X \) and \( Y \) be \( G \)-spaces and \( H \) be a closed subgroup of the group \( G \). The set
\[
X^H = \{ x \in X \mid hx = x \text{ for all } h \in H \}
\]
is called the \textit{space of \( H \)-fixed points} of the \( G \)-space \( X \). An equivariant mapping \( f : X \to Y \) generates a natural mapping \( p^H \) between the spaces of \( H \)-fixed points of \( X^H \) and \( Y^H \).

\begin{theorem}\label{th-5}
Let \( p : E \to B \) be a Hurewicz \( G \)-fibration. Then for an arbitrary closed subgroup \( H \) of the compact group \( G \), the induced mapping \( p^H : E^H \to B^H \) between the spaces of \( H \)-fixed points is a Hurewicz fibration.
\end{theorem}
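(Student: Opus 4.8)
The plan is to apply the inner characterization from Theorem~\ref{th-1}: it suffices to produce a covering function for the nonequivariant map $p^H : E^H \to B^H$. The natural candidate is the restriction of the equivariant covering function $\lambda : \Delta \to E^I$ that exists for $p$ by Theorem~\ref{th-1}. First I would set up the relevant spaces. For the base $B^H$ one has the path space $(B^H)^I$, and I claim that $(B^H)^I = (B^I)^H$, where the right-hand side is the $H$-fixed set of $B^I$ under the induced $G$-action $(g\alpha)(t) = g\alpha(t)$: indeed a path $\alpha$ satisfies $h\alpha = \alpha$ for all $h \in H$ precisely when $\alpha(t) \in B^H$ for every $t$. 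Using this, I would identify the space $\Delta_H := \{(e,\alpha) \in E^H \times (B^H)^I \mid \alpha(0) = p^H(e)\}$ associated to $p^H$ with $\Delta \cap (E^H \times (B^I)^H) = \Delta^H$, the $H$-fixed set of the invariant subspace $\Delta \subset E \times B^I$.

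The key step is then to check that $\lambda$ carries $\Delta^H$ into $(E^I)^H = (E^H)^I$. Since $\lambda$ is $H$-equivariant and $\Delta^H$ is exactly the set of $H$-fixed points of $\Delta$, for $(e,\alpha) \in \Delta^H$ and $h \in H$ we get $h \cdot \lambda(e,\alpha) = \lambda(h(e,\alpha)) = \lambda(e,\alpha)$, so $\lambda(e,\alpha) \in (E^I)^H = (E^H)^I$. Thus $\lambda$ restricts to a continuous map $\lambda^H : \Delta_H \to (E^H)^I$. The two defining identities for a covering function, namely $\lambda^H(e,\alpha)(0) = e$ and $[p^H \circ \lambda^H(e,\alpha)](t) = \alpha(t)$, are inherited verbatim from the corresponding identities for $\lambda$, since $p^H$ is just the restriction of $p$. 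Hence $\lambda^H$ is a covering function for $p^H$, and by the nonequivariant version of Theorem~\ref{th-1} (equivalently, by Corollary~\ref{cor-1} applied with trivial group, or by the classical Hurewicz criterion), $p^H : E^H \to B^H$ is a Hurewicz fibration.

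I expect the only genuine subtlety to be the pair of topological identifications $(B^H)^I = (B^I)^H$ and $(E^H)^I = (E^I)^H$, which rely on the compact–open topology being well behaved with respect to fixed-point subspaces; these are standard but should be stated carefully. Everything else — continuity of the restriction $\lambda^H$, and the verification of the covering-function identities — is routine. One should also note at the outset that $E^H$ and $B^H$ are closed (hence reasonable) subspaces, so that $p^H$ is a bona fide continuous map between them, which is immediate since $p$ is equivariant and continuous.
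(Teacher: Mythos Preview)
Your proposal is correct and follows essentially the same approach as the paper: take the covering $G$-function $\lambda$ for $p$ supplied by Theorem~\ref{th-1}, restrict to $H$-fixed points to obtain $\lambda^H : \Delta^H \to (E^H)^I$, and observe that this is a covering function for $p^H$. The paper states this in one line without spelling out the identifications $(B^H)^I = (B^I)^H$ and $\Delta_H = \Delta^H$ that you carefully justify, so your write-up is simply a more detailed version of the same argument.
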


\begin{proof}
By Theorem \ref{th-1}, for the \( G \)-mapping \( p \), there exists a covering \( G \)-function \( \lambda : \Delta \to E^I \). Then \( \lambda^H : \Delta^H \to (E^H)^I \) is a covering function for the mapping \( p^H : E^H \to B^H \). Therefore, \( p^H : E^H \to B^H \) is a Hurewicz fibration. \qedhere
\end{proof}

In the case where \( G \) is a compact Lie group and \( E \) and \( B \) be \( G \)-CW-complexes, the converse assertion is also valid (see \cite[Theorem 9]{Gev-Jim}).

The property of an equivariant mapping to be a Hurewicz \( G \)-fibration is preserved under passing to spaces of \( H \)-orbits.

\begin{theorem}\label{th-5-1}
Let \( p : E \to B \) be a Hurewicz \( G \)-fibration. Then for an arbitrary closed subgroup \( H \) of the compact group \( G \), the induced \( G \)-mapping \( p^* : E|H \to B|H \) between the spaces of \( H \)-orbits is a Hurewicz \( G \)-fibration.
\end{theorem}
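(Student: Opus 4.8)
The plan is to follow the pattern of the proofs of Theorems~\ref{th-5-0} and~\ref{th-5}: produce an equivariant covering function for $p^{*}$ and then invoke Theorem~\ref{th-1}. Concretely, apply Theorem~\ref{th-1} to the Hurewicz $G$-fibration $p$ to fix a covering $G$-function $\lambda:\Delta\to E^{I}$, where $\Delta=\{(e,\alpha)\in E\times B^{I}\mid \alpha(0)=p(e)\}$. Let $q_{E}:E\to E|H$ and $q_{B}:B\to B|H$ denote the $H$-orbit projections; these are open equivariant surjections intertwining the actions on $E,B$ with the residual actions on $E|H,B|H$, they satisfy $p^{*}\circ q_{E}=q_{B}\circ p$, and since $H$ is compact they are moreover closed and proper. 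Put $\Delta^{*}=\{(\bar e,\beta)\in (E|H)\times (B|H)^{I}\mid \beta(0)=p^{*}(\bar e)\}$; this is the invariant subspace of $(E|H)\times (B|H)^{I}$ on which a covering $G$-function for $p^{*}$ has to be defined.

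The heart of the argument is to transport $\lambda$ to the orbit level, i.e.\ to construct an equivariant map $\lambda^{*}:\Delta^{*}\to (E|H)^{I}$ with $\lambda^{*}(\bar e,\beta)(0)=\bar e$ and $[p^{*}\circ\lambda^{*}(\bar e,\beta)](t)=\beta(t)$. Given $(\bar e,\beta)\in\Delta^{*}$ one selects $e\in q_{E}^{-1}(\bar e)$ and a path $\alpha\in B^{I}$ with $q_{B}\circ\alpha=\beta$ and $\alpha(0)=p(e)$, and sets $\lambda^{*}(\bar e,\beta)=q_{E}\circ\lambda(e,\alpha)$. The two covering identities for $\lambda^{*}$ then follow immediately from the corresponding identities for $\lambda$ together with $p^{*}\circ q_{E}=q_{B}\circ p$, and the $G$-equivariance of $\lambda^{*}$ follows from that of $\lambda$ and the equivariance of $q_{E}$ and $q_{B}$ (using that if $ge$ represents $g\bar e$ then $g\alpha$ is an admissible lift of $g\beta$). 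What genuinely needs work is to show that $q_{E}\circ\lambda(e,\alpha)$ is independent of the choice of $e$ and of the lift $\alpha$, and that the resulting $\lambda^{*}$ is continuous: two representatives of $\bar e$ differ by an element $h\in H$, the corresponding lifts differ by $h$ as well, and $\lambda(he,h\alpha)=h\,\lambda(e,\alpha)$ by equivariance, so their $q_{E}$-images coincide — but upgrading this pointwise remark to a continuous global definition on $\Delta^{*}$ is where the compactness of $H$ has to be used, via the properness of $q_{B}$ and the local structure of orbit maps of compact group actions, to make a coherent selection of lifts of the (compact) path $\beta$.

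Once the covering $G$-function $\lambda^{*}$ for $p^{*}$ is in hand, Theorem~\ref{th-1} yields that $p^{*}:E|H\to B|H$ is a Hurewicz $G$-fibration. The step I expect to be the main obstacle is precisely this well-definedness and continuity of $\lambda^{*}$ on all of $\Delta^{*}$: a path in an orbit space need not lift through the orbit projection in general, so the delicate point is to exploit the compactness of $H$ to produce such lifts consistently over $\Delta^{*}$; the remaining verifications are a routine transcription of the properties of $\lambda$, exactly as in the proofs of Theorems~\ref{th-5-0} and~\ref{th-5}.
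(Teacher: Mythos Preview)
Your approach is exactly the paper's: pass the covering $G$-function $\lambda$ to the $H$-orbit level by the formula $\lambda^{*}(e^{*},\alpha^{*})(t)=(\lambda(e,\alpha)(t))^{*}$ and then invoke Theorem~\ref{th-1}. The paper's proof is in fact terser than your proposal: it writes down this formula, asserts in one line that $\lambda^{*}$ is well defined and is a covering $G$-function for $p^{*}$, and concludes. The issue you single out as the main obstacle --- that a path $\alpha^{*}$ in $B|H$ must first be lifted to a path $\alpha$ in $B$ with $\alpha(0)=p(e)$ before $\lambda$ can be applied, and that the existence of such lifts and the continuity of the resulting $\lambda^{*}$ on all of $\Delta^{*}$ require the compactness of $H$ --- is not discussed in the paper at all; it is simply taken for granted. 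So you have correctly reconstructed the intended argument and, moreover, identified precisely the point the paper leaves unjustified.
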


\begin{proof}
Let \( \lambda : \Delta \to E^I \) be a covering \( G \)-function for the \( G \)-mapping \( p \) (see Theorem \ref{th-1}). Consider the set
\[
\Delta_H = \{(e^*, \alpha^*) \in E|H \times (B|H)^I \mid \alpha^*(0) = p^*(e^*) \}
\]
and introduce the mapping
\[
\lambda^* : \Delta_H \rightarrow (E|H)^I, \quad \lambda^*(e^*, \alpha^*)(t) = (\lambda(e, \alpha)(t))^*.
\]
The mapping \(\lambda^*\) is well defined; it is a covering \(G\)-function for \(p^* : E|H \rightarrow B|H\). Therefore, by Theorem \ref{th-1}, \(p^* : E|H \rightarrow B|H\) is a Hurewicz \(G\)-fibration.
\end{proof}

In the particular case where \(H = G\), Theorem \ref{th-5-1} implies the following assertion.

\begin{corollary}\label{cor-2}
Let \(p : E \rightarrow B\) be a Hurewicz \(G\)-fibration. Then the induced mapping \(p^* : E|G \rightarrow B|G\) between the space of orbits is a Hurewicz fibration.
\end{corollary}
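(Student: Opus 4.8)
The plan is to deduce Corollary \ref{cor-2} as the special case $H = G$ of Theorem \ref{th-5-1}, since when $H = G$ the space of $H$-orbits $E|H$ coincides with the orbit space $E/G$ (every $G$-orbit is a single point under the residual $G$-action, so the $G$-action on $E|G$ is trivial and a Hurewicz $G$-fibration between spaces with trivial $G$-action is simply a Hurewicz fibration). Thus I would first observe that $E|G$ and $B|G$ carry the trivial $G$-action, and that for spaces with trivial $G$-action the equivariant covering homotopy property with respect to all $G$-spaces reduces to the ordinary covering homotopy property with respect to all spaces. Granting that identification, Theorem \ref{th-5-1} applied with $H = G$ yields immediately that $p^* : E|G \to B|G$ is a Hurewicz $G$-fibration, hence a Hurewicz fibration.

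Concretely, the steps are: (1) note $E|G = E/G$ and $B|G = B/G$ as topological spaces, with the quotient $G$-action being trivial; (2) recall that a $G$-mapping between trivial $G$-spaces is just a continuous map, and that the ECHP over the class of all $G$-spaces specializes, by taking $X$ with trivial action, to the CHP over the class of all spaces (and the ECHP over trivial $G$-spaces is no stronger, since any homotopy into a trivial $G$-space is automatically equivariant when $X$ has trivial action, and conversely any $G$-space maps $G$-equivariantly to a trivial $G$-space through its orbit projection); (3) invoke Theorem \ref{th-5-1} with $H = G$ to conclude $p^*$ is a Hurewicz $G$-fibration; (4) combine (2) and (3).

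I expect essentially no obstacle here: this is a direct specialization, and the only point requiring a word of care is the identification in step (2), namely that ``Hurewicz $G$-fibration between trivial $G$-spaces'' is the same notion as ``Hurewicz fibration.'' This is routine — it already underlies Corollary \ref{cor-1} — so the corollary follows in one line from Theorem \ref{th-5-1}.

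\begin{proof}
Apply Theorem \ref{th-5-1} with $H = G$. The space of $G$-orbits $E|G$ is the orbit space $E/G$ equipped with the trivial $G$-action, and likewise for $B|G$. Since a $G$-mapping between $G$-spaces with trivial action is simply a continuous map, and since the equivariant covering homotopy property with respect to all $G$-spaces, when target spaces carry the trivial action, coincides with the ordinary covering homotopy property with respect to all spaces, the Hurewicz $G$-fibration $p^* : E|G \to B|G$ provided by Theorem \ref{th-5-1} is exactly a Hurewicz fibration.
\end{proof}
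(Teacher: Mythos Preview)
Your proposal is correct and follows exactly the paper's approach: the paper states the corollary as the special case $H=G$ of Theorem~\ref{th-5-1} with no further argument. Your extra justification that a Hurewicz $G$-fibration between trivial $G$-spaces is an ordinary Hurewicz fibration is fine, though you could have shortened it by simply invoking Corollary~\ref{cor-1} (every Hurewicz $G$-fibration is a Hurewicz fibration) after applying Theorem~\ref{th-5-1}.
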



\section{Local and global \texorpdfstring{\( G \)-fibrations}{G-fibrations}}

An equivariant mapping \(p : E \rightarrow B\) is called a \textit{local Hurewicz \(G\)-fibration} if for an arbitrary point \(b \in B\), there exists an invariant neighborhood \(U\) such that 
\[
p|_{p^{-1}(U)} : p^{-1}(U) \rightarrow U
\]
is a Hurewicz \(G\)-fibration.

An open cover \(\mathcal{U} = \{U_i \mid i \in \mathcal{I}\}\) of a \(G\)-space \(X\) consisting of invariant subsets \(U_i\) is called a \textit{\(G\)-cover}.

A \(G\)-cover \(\mathcal{U} = \{U_i \mid i \in \mathcal{I}\}\) is called a \textit{normal \(G\)-cover} if for any \(U_i\), there exists a function \(\chi_i : X \rightarrow I\) such that \(\chi_i(gx) = \chi_i(x)\) for arbitrary \(x \in X\) and \(g \in G\) and \(U_i = \{x \in X \mid \chi_i(x) \neq 0\}\). The function \(\chi_i\) is called the \textit{characteristic \(G\)-function} of the open invariant subset \(U_i \subset X\).

\begin{lemma}\label{lemm-2}
Let \(G\) be a compact group. Then an arbitrary paracompact \(G\)-space \(X\) possesses a locally finite normal \(G\)-cover.
\end{lemma}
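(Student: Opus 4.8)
The plan is to reduce everything to ordinary partition-of-unity theory on the orbit space \(X|G\) and then pull the result back along the orbit projection \(\pi : X \to X|G\). It is convenient to prove the following slightly stronger statement, from which the lemma follows at once (apply it to any \(G\)-cover of \(X\), e.g. to \(\{X\}\)): \emph{every \(G\)-cover \(\mathcal{U} = \{U_i \mid i \in \mathcal{I}\}\) of a paracompact \(G\)-space \(X\) admits a locally finite normal \(G\)-cover refining it.}

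First I would record two facts about \(\pi\). For an arbitrary topological group action \(\pi\) is open, so \(\pi\) carries \(\mathcal{U}\) to an open cover \(\{\pi(U_i)\}\) of \(X|G\) with \(U_i = \pi^{-1}(\pi(U_i))\). When \(G\) is compact \(\pi\) is in addition closed: for closed \(A \subseteq X\) the saturation \(G\cdot A\) equals the image of the closed set \(\{(g,x) \in G\times X \mid g^{-1}x \in A\}\) under the projection \(G \times X \to X\), which is a closed map because \(G\) is compact. Hence \(\pi\) is a closed continuous surjection from the paracompact Hausdorff space \(X\) onto the Hausdorff space \(X|G\) (Hausdorffness of the orbit space uses compactness of \(G\) once more), and therefore \(X|G\) is paracompact, being a closed continuous image of a paracompact space. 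Now choose, by paracompactness, a locally finite partition of unity \(\{\varphi_\alpha : X|G \to I\}_{\alpha}\) subordinate to \(\{\pi(U_i)\}\), say \(\{\varphi_\alpha \neq 0\} \subseteq \pi(U_{i(\alpha)})\). Set \(\chi_\alpha = \varphi_\alpha \circ \pi : X \to I\) and \(V_\alpha = \{x \in X \mid \chi_\alpha(x) \neq 0\} = \pi^{-1}(\{\varphi_\alpha \neq 0\})\). Each \(\chi_\alpha\) is continuous and, factoring through \(\pi\), is \(G\)-invariant; each \(V_\alpha\) is open, invariant and contained in \(U_{i(\alpha)}\); the \(V_\alpha\) cover \(X\); and \(\{V_\alpha\}\) is locally finite because a neighbourhood of \(\pi(x)\) meeting only finitely many \(\{\varphi_\alpha \neq 0\}\) pulls back to a neighbourhood of \(x\) meeting only finitely many \(V_\alpha\). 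Thus \(\{V_\alpha\}\) is a locally finite normal \(G\)-cover refining \(\mathcal{U}\).

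The only step that is not pure bookkeeping is the paracompactness of \(X|G\), obtained as above from the closedness of \(\pi\); this is the point to watch. One can also avoid the orbit space entirely: take a locally finite partition of unity \(\{\psi_j\}\) on \(X\) itself (subordinate to the given cover), put
\[
\chi_j(x) = \int_G \psi_j(gx)\, d\mu(g)
\]
for the normalized Haar measure \(\mu\) on the compact group \(G\), and set \(U_j' = \{\chi_j \neq 0\}\). Then \(\chi_j\) is continuous (the standard averaging argument over a compact group) and \(G\)-invariant by right-invariance of \(\mu\), \(U_j'\) is the saturation \(G\cdot\{\psi_j \neq 0\}\), the \(U_j'\) form a \(G\)-cover refining \(\mathcal{U}\), and local finiteness of \(\{U_j'\}\) follows from compactness of the orbits together with the closedness of saturations of closed sets, since then each point of \(X\) has an invariant neighbourhood meeting only finitely many of the sets \(\{\psi_j \neq 0\}\), hence only finitely many \(U_j'\). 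I expect this local-finiteness verification to be the fussiest part of the second route, which is why I would favour the first.
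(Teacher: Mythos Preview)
Your proposal is correct. Your \emph{alternative} route---averaging a subordinate partition of unity by Haar measure---is precisely the paper's approach: the paper starts from a locally finite normal cover by invariant open sets with (non-invariant) characteristic functions \(h_i\), sets \(\chi_i(x)=\int_G h_i(gx)\,dg\), and observes that \(\chi_i\) is an invariant characteristic function for the same \(U_i\). The paper's version is terser than yours because it \emph{assumes} the starting cover already consists of invariant sets, so no local-finiteness of saturations needs to be checked; your version is more self-contained because it justifies that step.

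Your \emph{preferred} route via the orbit space is genuinely different: the paper never passes to \(X|G\). Your argument trades the Haar-integral computation for the single topological fact that \(\pi:X\to X|G\) is a closed surjection (hence \(X|G\) is paracompact), after which everything is pulled back. This is cleaner conceptually and gives the refinement statement for free, whereas the paper's averaging argument keeps the construction on \(X\) and is perhaps more in keeping with the paper's later use of Haar averaging elsewhere. Both approaches are standard; the orbit-space route avoids exactly the ``fussiest part'' you identified.
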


\begin{proof}
Let \(\mathcal{U} = \{U_i \mid i \in \mathcal{I}\}\) be a locally finite normal cover consisting of open invariant subsets of a paracompact \(G\)-space \(X\) and \(h_i : X \rightarrow I\) be the characteristic function (not equivariant) of the subset \(U_i, i \in \mathcal{I}\). Then the function \(\chi_i : X \rightarrow I\) defined by the formula \(\chi_i(x) = \int h_i(gx)dg\), where \(\int\) is the Haar integral, is the characteristic \(G\)-function of the invariant subset \(U_i\). Therefore, \(\mathcal{U} = \{U_i \mid i \in \mathcal{I}\}\) is a locally finite normal \(G\)-cover of the paracompact \(G\)-space \(X\). 
\end{proof}

The following theorem shows that under sufficiently general assumptions, a local Hurewicz \(G\)-fibration is a global Hurewicz \(G\)-fibration.

\begin{theorem}\label{th-06}
Let \(G\) be a compact group, \(p : E \rightarrow B\) be an equivariant mapping, and \(\mathcal{U} = \{U_i \mid i \in \mathcal{I}\}\) be a locally finite normal \(G\)-cover of the \(G\)-space \(B\). If \(p|_{p^{-1}(U_i)} : p^{-1}(U_i) \rightarrow U_i\) is a Hurewicz \(G\)-fibration for arbitrary \(U_i \in \mathcal{U}\), then \(p\) is a Hurewicz \(G\)-fibration.
\end{theorem}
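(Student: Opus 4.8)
The plan is to transfer Hurewicz's classical uniformization argument to the equivariant setting; the only genuinely new inputs are the inner characterization of $G$-fibrations by extended covering functions (Theorem~\ref{th-1-1}) and the equivariant partition of unity provided by the characteristic $G$-functions $\chi_i$ of the cover $\mathcal{U}$.

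First I would assemble the local data. Applying Theorem~\ref{th-1-1} to each restriction $p_i := p|_{p^{-1}(U_i)} : p^{-1}(U_i) \to U_i$, which is a Hurewicz $G$-fibration by hypothesis, I obtain for every $i \in \mathcal{I}$ an extended covering $G$-function $\Lambda_i$ for $p_i$ over the whole path space $(U_i)^I$; the reason for using the \emph{extended} version is that $\Lambda_i$ lifts a path lying in $U_i$ starting from an \emph{arbitrary} prescribed parameter $s \in I$, which is exactly what is needed in order to concatenate partial lifts. Next I would normalize the characteristic $G$-functions: since $\mathcal{U}$ is a locally finite $G$-cover of $B$, the sum $\sigma = \sum_{i \in \mathcal{I}} \chi_i$ is everywhere a finite sum, hence continuous, it is $G$-invariant, and it is strictly positive because $\mathcal{U}$ covers $B$; therefore the functions $\psi_i = \chi_i / \sigma$ form a $G$-invariant partition of unity subordinate to $\mathcal{U}$, with $\{x \in B \mid \psi_i(x) \neq 0\} = U_i$. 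Finally I would fix a well-ordering of the index set $\mathcal{I}$.

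The construction of a covering $G$-function $\lambda : \Delta \to E^I$ then proceeds as in Hurewicz's proof. Given $(e,\alpha) \in \Delta$, the compact set $\alpha(I)$ meets only finitely many members of $\mathcal{U}$, and by inspecting the functions $t \mapsto \psi_i(\alpha(t))$ one cuts $I$ into finitely many subintervals $0 = \tau_0 \le \tau_1 \le \cdots \le \tau_N = 1$, depending continuously on $(e,\alpha)$ over suitable neighborhoods, so that $\alpha([\tau_{k-1},\tau_k])$ is contained in a single $U_{i(k)}$. One then lifts $\alpha$ over $[\tau_0,\tau_1]$ using $\Lambda_{i(1)}$ starting at $e$, over $[\tau_1,\tau_2]$ using $\Lambda_{i(2)}$ starting at the right endpoint of the lift just produced (after truncating $\alpha|_{[\tau_{k-1},\tau_k]}$ and extending it to a path inside $U_{i(k)}$), and so on, and concatenates the pieces. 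This yields $\lambda(e,\alpha) \in E^I$ with $\lambda(e,\alpha)(0) = e$ and $p \circ \lambda(e,\alpha) = \alpha$. Because the $\chi_i$, hence the $\psi_i$ and all the cut points $\tau_k$, are $G$-invariant, each $\Lambda_i$ is a $G$-mapping, and $G$ acts trivially on $I$, the resulting map $\lambda$ is automatically equivariant, so it is an equivariant covering function for $p$, and Theorem~\ref{th-1} gives that $p$ is a Hurewicz $G$-fibration. (Together with Lemma~\ref{lemm-2}, this also yields the paracompact case, Theorem~\ref{th-4}.)

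I expect the main obstacle to be the continuity of the glued map $\lambda$: one must choose the cut points $\tau_k$ and the index assignments $i(k)$ so that they vary continuously with $(e,\alpha)$ across the loci where some $\psi_i$ becomes nonzero or where an old one vanishes, and this is precisely the delicate bookkeeping in the proofs of Hurewicz and Dold, controlled by the well-ordering together with the partition-of-unity estimates. By contrast, the equivariance is painless: every ingredient entering the construction is $G$-equivariant or $G$-invariant, and all operations used — minima, maxima, sums, truncation, reparametrization, concatenation, composition — preserve these properties, so no separate equivariance argument is needed.
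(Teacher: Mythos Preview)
Your strategy is the same as the paper's: reduce to the inner characterization via extended covering $G$-functions (Theorem~\ref{th-1-1}), lift a path piecewise using the local $\Lambda_i$'s, glue with a partition of unity, and observe that equivariance is automatic because every ingredient is $G$-invariant or $G$-equivariant. Your last paragraph is exactly right on that point.

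There is, however, one technical divergence worth flagging, and it bears precisely on the continuity issue you anticipate. You propose to build the partition of unity on $B$ (the normalized $\psi_i=\chi_i/\sigma$) and then extract cut points $\tau_k$ by ``inspecting'' $t\mapsto\psi_i(\alpha(t))$; this is left vague, and making such cut points vary continuously in $\alpha$ is genuinely awkward. The paper instead transfers the problem to the path space: it covers $B^I$ by the invariant open sets
\[
V_{i_1\ldots i_k}=\Bigl\{\alpha\in B^I \;\Bigm|\; \alpha\bigl([(j-1)/k,\,j/k]\bigr)\subset U_{i_j},\ j=1,\dots,k\Bigr\},
\]
constructs an extended covering $G$-function over each $V_{i_1\ldots i_k}$ by concatenating the $\Lambda_j$'s along the fixed subdivision $\{j/k\}$, and then refines $\{V_{i_1\ldots i_k}\}$ to a locally finite normal $G$-cover $\{W_\mu\}$ of $B^I$ with a linearly ordered index set (citing Dugundji/Hurewicz for this step). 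The cut points are then simply the partial sums $t_i=\bigl(\sum_{j\le i}\chi_{\mu_j}(\alpha)\bigr)/\bigl(\sum_j\chi_{\mu_j}(\alpha)\bigr)$, manifestly continuous and $G$-invariant in $\alpha$. So the partition of unity that actually drives the gluing lives on $B^I$, not on $B$; once you make that shift, the ``delicate bookkeeping'' you mention largely dissolves.
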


\begin{proof}
For any finite set of indices \(i_1, \ldots, i_k \in \mathcal{I}\) (\(k \geq 1\)), we introduce the set \(V_{i_1 \ldots i_k} \subset B^I\) as follows:
\[
V_{i_1 \ldots i_k} = \left\{ \alpha \in B^I \mid \alpha \left( \left[ \frac{j-1}{k}, \frac{j}{k} \right] \right) \subset U_{i_j},\ j = 1, \ldots, k \right\}.
\]
Clearly, \(V_{i_1 \ldots i_k}\) is an invariant subset of the \(G\)-space \(B^I\). We prove that there exists an extended covering \(G\)-function over each \(V_{i_1 \ldots i_k}\). By Theorem \ref{th-5}, there exists an extended covering \(G\)-function \(\Lambda_j : \Delta_{U_{i_j}} \rightarrow E^I\) over each \(U_{i_j},\ j = 1, \ldots, k\). Consider an arbitrary element \((e, \alpha, s) \in \Delta_{V_{i_1 \ldots i_k}}\). Assume that \(s \in [(n - 1)/k, n/k]\) for some natural \(n\). Let \(\alpha_j \in B^I, j = 1, \ldots, k\), be the path defined by the formula
\[
\alpha_j(t) =
\begin{cases}
\alpha \left( \frac{j-1}{k} \right), & t \in \left[ 0, \frac{j-1}{k} \right], \\
\alpha(t), & t \in \left[ \frac{j-1}{k}, \frac{j}{k} \right], \\
\alpha \left( \frac{j}{k} \right), & t \in \left[ \frac{j}{k}, 1 \right].
\end{cases}
\]
Now we define an extended covering \(G\)-function \(\Lambda : \Delta_{V_{i_1 \ldots i_k}} \rightarrow E^I\) as follows. First, we define \(\Lambda(e, \alpha, s)\) on the segment \([(n - 1)/k, n/k]\) by the formula
\[
\Lambda(e, \alpha, s)(t) = \Lambda_n(e, \alpha_n, s), \quad t \in \left[ \frac{n-1}{k}, \frac{n}{k} \right].
\]
Next, we define \(\Lambda(e,\alpha,s)\) consecutively on the segments \([(n-2)/k,(n-1)/k]\), \([(n-3)/k,(n-2)/k]\), \ldots, \([0,1/k]\) by the formula
\[
\Lambda(e,\alpha,s)(t) = \Lambda_j \left( \Lambda(e,\alpha,s) \left( \frac{j}{k} \right), \alpha_j, \frac{j}{k} \right), \quad t \in \left[ \frac{j-1}{k}, \frac{j}{k} \right], \quad j = n-1, n-2, \ldots, 1.
\]
Finally, we define \(\Lambda(e,\alpha,s)\) consecutively on the segments \([n/k,(n+1)/k]\), \([(n+1)/k,(n+2)/k]\), \ldots, \([(k-1)/k,1]\) by the formula
\[
\Lambda(e,\alpha,s)(t) = \Lambda_{j+1} \left( \Lambda(e,\alpha,s) \left( \frac{j}{k} \right), \alpha_{j+1}, \frac{j}{k} \right), \quad t \in \left[ \frac{j}{k}, \frac{j+1}{k} \right], \quad j = n, n+1, \ldots, k-1.
\]
Obviously, the equivariant mapping \(\Lambda : \Delta_{V_{i_1\ldots i_k}} \to E^I\) constructed satisfies Definition \ref{def-ext}, i.e., is an extended covering \(G\)-function over \(V_{i_1\ldots i_k}\).

The family \(\{V_{i_1\ldots i_k};\ k=1,2,\ldots\}\) is a normal \(G\)-cover of the \(G\)-space \(B^I\) and contains an inscribed \(G\)-cover \(\{W_\mu\}\) with linearly ordered set of indices \(\mathcal{M}\), which is also locally finite (see \cite[Lemma 3.4]{Dugundji}, or \cite{Hurewicz}).

Let \(\chi_\mu : B^I \to I\) be the characteristic \(G\)-function of an invariant subset \(W_\mu \subset B^I\), i.e., \(\chi_\mu(g\alpha) = \chi_\mu(\alpha)\) for arbitrary \(g \in G\) and \(\alpha \in B^I\) and let \(W_\mu = \{\alpha \in B^I \mid \chi_\mu(\alpha) \neq 0\}\). We denote the extended covering \(G\)-function over the invariant set \(W_\mu \subset B^I\) by \(\Lambda_\mu\). Consider an arbitrary element \((e,\alpha) \in \Delta \subset E \times B^I\). Let \(\mu_1 < \mu_2 < \ldots < \mu_n\) be the set of all indices such that \(\alpha \in W_{\mu_i},\ i=1,\ldots,n\). We define the points \(t_i,\ i=1,\ldots,n\), of the unit segment \(I\) by the formula
\[
t_i = \frac{\sum\limits_{j=1}^i \chi_{\mu_j}(\alpha)}{\sum\limits_{j=1}^n \chi_{\mu_j}(\alpha)}, \quad i=1,\ldots,n.
\]
Consider the path \(\lambda(e,\alpha) \in E^I\) defined by the following formulas consecutively on the segments \([0,t_1]\), \([t_1,t_2], \ldots, [t_{n-1},1]\):
\[
\lambda(e,\alpha)(t) = \Lambda_{\mu_1}(e,\alpha,0)(t), \quad t \in [0,t_1],
\]
\[
\lambda(e,\alpha)(t) = \Lambda_{\mu_{i+1}}(\lambda(e,\alpha)(t_i), \alpha, t_i)(t), \quad t \in [t_i, t_{i+1}], \quad i=1,2,\ldots,n-1.
\]
Obviously, \(\lambda(e,\alpha)(0) = e\) and \([p \circ \lambda(e,\alpha)](t) = \alpha(t)\). Moreover, \(\lambda : \Delta \to E^I\) is an equivariant mapping since \(\Lambda_\mu, \mu \in \mathcal{M}\), are equivariant. Therefore, \(\lambda : \Delta \to E^I\) is a covering \(G\)-function for \(p\), i.e., \(p : E \to B\) is a Hurewicz \(G\)-fibration by Theorem \ref{th-1}.
\end{proof}

Theorem \ref{th-06} and Lemma \ref{lemm-2} immediately imply the following assertion.

\begin{theorem}\label{th-4}
Let \(G\) be a compact group and \(B\) be a paracompact \(G\)-space. An equivariant mapping \(p : E \to B\) is a Hurewicz \(G\)-fibration if and only if it is a local Hurewicz \(G\)-fibration.
\end{theorem}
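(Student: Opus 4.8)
The plan is to prove the two implications separately: the forward one is immediate, and all the content is in the converse. If $p : E \to B$ is a Hurewicz $G$-fibration, then taking $U = B$ as the invariant neighbourhood of every point $b \in B$ shows at once that $p$ is a local Hurewicz $G$-fibration. So the work is in the reverse implication.

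Assume $p : E \to B$ is a local Hurewicz $G$-fibration. For each $b \in B$ pick an invariant open neighbourhood $U_b$ (passing to its interior if necessary) with $p|_{p^{-1}(U_b)} : p^{-1}(U_b) \to U_b$ a Hurewicz $G$-fibration; then $\mathcal{U} = \{U_b \mid b \in B\}$ is an open $G$-cover of $B$. The key point is to replace $\mathcal{U}$ by a locally finite normal $G$-cover $\mathcal{V} = \{V_\mu \mid \mu \in \mathcal{M}\}$ that is still subordinate to $\mathcal{U}$, i.e.\ with each $V_\mu$ contained in some $U_{b(\mu)}$. Granting this, each restriction $p|_{p^{-1}(V_\mu)} : p^{-1}(V_\mu) \to V_\mu$ is again a Hurewicz $G$-fibration, because restricting a Hurewicz $G$-fibration to an invariant open subset of its base preserves the property: a covering $G$-function $\lambda : \Delta \to E^I$ for $p|_{p^{-1}(U_{b(\mu)})}$, which exists by Theorem \ref{th-1}, sends any pair $(e,\alpha)$ with $\alpha \in V_\mu^I$ to a path $\lambda(e,\alpha)$ projecting onto $\alpha$, hence lying entirely in $p^{-1}(V_\mu)$, so $\lambda$ restricts to a covering $G$-function over $V_\mu$ and Theorem \ref{th-1} applies. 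Now Theorem \ref{th-06}, applied to the locally finite normal $G$-cover $\mathcal{V}$ of $B$, yields that $p$ is a Hurewicz $G$-fibration.

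It remains to produce $\mathcal{V}$, and this is where paracompactness enters; it is essentially the refined form of Lemma \ref{lemm-2}, namely that every open $G$-cover of a paracompact $G$-space has a locally finite normal $G$-refinement. Since $G$ is compact, the orbit projection $\pi : B \to B/G$ is an open quotient map and $B/G$ is paracompact. Each $U_b$ is open and invariant, so $U_b = \pi^{-1}(\pi(U_b))$ with $\pi(U_b)$ open, and $\{\pi(U_b) \mid b \in B\}$ is an open cover of $B/G$; choose a locally finite open refinement $\{W_\mu \mid \mu \in \mathcal{M}\}$ of it together with a partition of unity $\{\varphi_\mu\}$ whose cozero sets are exactly the $W_\mu$. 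Set $V_\mu := \pi^{-1}(W_\mu)$ and $\chi_\mu := \varphi_\mu \circ \pi$. Then the $V_\mu$ are invariant and subordinate to $\mathcal{U}$, the family $\{V_\mu\}$ is locally finite (a neighbourhood of $\pi(x)$ meeting only finitely many $W_\mu$ pulls back under $\pi$ to one meeting only finitely many $V_\mu$), and the $\chi_\mu$ are automatically $G$-invariant with $\{\chi_\mu \neq 0\} = V_\mu$, so $\mathcal{V}$ is a normal $G$-cover. The main obstacle is exactly this refinement step: arranging a single cover that is simultaneously subordinate to $\mathcal{U}$, invariant, locally finite, and normal. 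Descending to the paracompact orbit space $B/G$ and pulling everything back handles it cleanly, after which the conclusion follows by the direct invocation of Theorems \ref{th-1} and \ref{th-06} described above.
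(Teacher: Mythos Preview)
Your proof is correct and follows essentially the same route as the paper: the paper derives Theorem~\ref{th-4} directly from Theorem~\ref{th-06} together with Lemma~\ref{lemm-2}, and your argument does exactly this, spelling out the refinement step that the paper leaves implicit. The only cosmetic difference is that you obtain the locally finite normal $G$-refinement by descending to the paracompact orbit space $B/G$ and pulling back a partition of unity, whereas the paper's Lemma~\ref{lemm-2} averages the non-equivariant characteristic functions via the Haar integral; both devices yield the same conclusion.
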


Thus, in the case of a paracompact base \(B\), the notions of local and global \(G\)-fibrations coincide. Theorem \ref{th-4} is an equivariant analog of the well-known Hurewicz theorem (see \cite{Hurewicz}).


\section{Weakly locally trivial \texorpdfstring{\( G \)-fibrations}{G-fibrations}}

Let \(G\) be a compact group, \(B\) and \(F\) be arbitrary \(G\)-spaces, and \(p : B \times F \to B\) be the projection onto the first factor: \(p(x,y) = x\). The equivariant mapping \(p : B \times F \to B\) is a Hurewicz \(G\)-fibration. Indeed, let \(X\) be an arbitrary \(G\)-space, \(\tilde{f}: X \to B \times F\) be an equivariant mapping, and \(F : X \times I \to B\) be an equivariant homotopy such that \(F(x,0) = p\tilde{f}(x),\ x \in X\). Obviously, the mapping \(\tilde{F} : X \times I \to B \times F\) defined by the formula \(\tilde{F}(x,t) = (F(x,t), \mathrm{pr}_2(\tilde{f}(x,0)))\) is an equivariant covering homotopy for \(F : X \times I \to B\). The projection \(p : B \times F \to B\) is called the \textit{trivial \(G\)-fibration}.

An equivariant mapping \(p : E \to B\) is called a \textit{locally trivial \(G\)-fibration} if for an arbitrary point \(b \in B\), there exists an invariant neighborhood \(U\) such that \(p|_{p^{-1}(U)} : p^{-1}(U) \to U\) is the trivial \(G\)-fibration.

Theorem \ref{th-4} implies that if \( B \) is a paracompact \( G \)-space, then the locally trivial \( G \)-fibration \( p : E \to B \) is a Hurewicz \( G \)-fibration.

\begin{definition}\label{def-1}
An equivariant mapping \( p : E \to B \) is called a \textit{weakly locally trivial \( G \)-fibration} if for an arbitrary point of the \( G \)-space \( B \), there exist an open invariant neighborhood \( U \) of this point and an equivariant mapping \( \omega : U \times p^{-1}(U) \to p^{-1}(U) \) such that  

(i) \( p \circ \omega(b, e) = b \) for all \((b, e) \in U \times p^{-1}(U)\),  

(ii) \( \omega(p(e), e) = e \) for all \( e \in p^{-1}(U) \).
\end{definition}

Condition (i) means that the composition  
\[
U \times p^{-1}(U) \xrightarrow{\omega} p^{-1}(U) \xrightarrow{p} U
\]  
is the projection onto the first factor. Obviously, a locally trivial \( G \)-fibration \( p : E \to B \) is weakly locally trivial \( G \)-fibration. In contrast to the trivial \( G \)-fibration, the equivariant mapping \( \omega : U \times p^{-1}(U) \to p^{-1}(U) \) in Definition \ref{def-1} is not an equivariant homeomorphism in general.

\begin{lemma}\label{lemm-1}
A weakly locally trivial \( G \)-fibration \( p : E \to B \) is a local Hurewicz \( G \)-fibration.
\end{lemma}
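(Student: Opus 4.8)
The plan is to show that near every point of $B$ the restricted map is a Hurewicz $G$-fibration, by constructing an equivariant covering function over a suitable invariant neighbourhood and invoking Theorem~\ref{th-1}. Fix $b_0 \in B$ and choose, by Definition~\ref{def-1}, an open invariant neighbourhood $U$ of $b_0$ together with an equivariant map $\omega : U \times p^{-1}(U) \to p^{-1}(U)$ satisfying conditions (i) and (ii). Write $E_U = p^{-1}(U)$ and $p_U = p|_{E_U} : E_U \to U$; this is an equivariant map between $G$-spaces. I want a covering $G$-function $\lambda : \Delta_U \to E_U^{\,I}$, where $\Delta_U = \{(e,\alpha) \in E_U \times U^I \mid \alpha(0) = p_U(e)\}$.

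The construction is the obvious one: for $(e,\alpha) \in \Delta_U$ put
\[
\lambda(e,\alpha)(t) = \omega\bigl(\alpha(t),\, e\bigr), \qquad t \in I.
\]
First I would check this is well defined, i.e. that $\omega(\alpha(t),e)$ makes sense: since $\alpha(t) \in U$ and $e \in E_U = p^{-1}(U)$, the pair $(\alpha(t),e)$ lies in $U \times p^{-1}(U)$, so $\omega$ is defined there and the value lies in $p^{-1}(U) = E_U$; also the assignment $t \mapsto \omega(\alpha(t),e)$ is continuous because $\omega$ and evaluation are continuous, so $\lambda(e,\alpha) \in E_U^{\,I}$. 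Next, the covering-function identities: at $t = 0$ we have $\alpha(0) = p_U(e) = p(e)$, so $\lambda(e,\alpha)(0) = \omega(p(e),e) = e$ by condition (ii); and for general $t$, $[p_U \circ \lambda(e,\alpha)](t) = p(\omega(\alpha(t),e)) = \alpha(t)$ by condition (i). Finally, equivariance: for $g \in G$ one has $g \cdot (e,\alpha) = (ge, g\alpha)$ with $(g\alpha)(t) = g\,\alpha(t)$, so
\[
\lambda(ge, g\alpha)(t) = \omega\bigl(g\,\alpha(t),\, ge\bigr) = g\,\omega\bigl(\alpha(t),\, e\bigr) = \bigl(g \cdot \lambda(e,\alpha)\bigr)(t),
\]
using that $\omega$ is equivariant for the diagonal action on $U \times p^{-1}(U)$; hence $\lambda$ is an equivariant map $\Delta_U \to E_U^{\,I}$.

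Thus $\lambda$ is an equivariant covering function for $p_U : p^{-1}(U) \to U$, and by Theorem~\ref{th-1} the restriction $p|_{p^{-1}(U)}$ is a Hurewicz $G$-fibration. Since $b_0 \in B$ was arbitrary, $p : E \to B$ is a local Hurewicz $G$-fibration by definition. I do not anticipate a serious obstacle here; the only point requiring a little care is making sure that the continuity of $t \mapsto \omega(\alpha(t),e)$ and the adjunction giving a genuine element of $E_U^{\,I}$ are handled correctly (which is routine provided $I$ is locally compact, so that the exponential law applies), and that the two defining conditions of $\omega$ are invoked for the correct endpoint versus general parameter. The argument is essentially the equivariant transcription of the classical fact that a map admitting a fibrewise "path-lifting template" $\omega$ over $U$ is a fibration over $U$.
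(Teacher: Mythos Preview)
Your proof is correct and follows essentially the same approach as the paper: define $\lambda(e,\alpha)(t) = \omega(\alpha(t),e)$ and verify that this is an equivariant covering function for $p|_{p^{-1}(U)}$, then apply Theorem~\ref{th-1}. Your version is in fact more detailed than the paper's, which simply writes down the formula for $\lambda$ and says ``It is easy to verify that $\lambda$ is a covering $G$-function''; your explicit checks of conditions (i), (ii), equivariance, and continuity are exactly what that phrase was suppressing.
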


\begin{proof}
Let \( b_0 \in B \) be an arbitrary point. Consider an invariant neighborhood \( U \) of this point and an equivariant mapping \( \omega : U \times p^{-1}(U) \to E \) satisfying the conditions (i) and (ii) of Definition \ref{def-1}. We prove that  
\[
p|_{p^{-1}(U)} : p^{-1}(U) \to U
\]  
is a Hurewicz \( G \)-fibration. We define the mapping  
\[
\lambda : \Delta \to (p^{-1}(U))^I, \quad \lambda(e, \alpha)(t) = \omega(\alpha(t), e).
\]  
It is easy to verify that \(\lambda\) is a covering \( G \)-function for the \( G \)-mapping \( p|_{p^{-1}(U)} \). 
\end{proof}

Lemma \ref{lemm-1} and Theorem \ref{th-4} immediately imply the following assertion.

\begin{theorem}\label{th-7}
Let \( p : E \to B \) be a weakly locally trivial \( G \)-fibration, where \( B \) is a paracompact \( G \)-space. Then \( p : E \to B \) is a Hurewicz \( G \)-fibration.
\end{theorem}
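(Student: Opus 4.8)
The plan is to obtain Theorem~\ref{th-7} as a direct deduction from the two results immediately preceding it, namely Lemma~\ref{lemm-1} and Theorem~\ref{th-4}; no new construction is required, since the substantive work has already been packaged into those statements.

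First I would invoke Lemma~\ref{lemm-1}: because \(p : E \to B\) is assumed to be a weakly locally trivial \(G\)-fibration, it is a local Hurewicz \(G\)-fibration. Concretely, around each point \(b_0 \in B\) Definition~\ref{def-1} supplies an invariant neighbourhood \(U\) together with an equivariant map \(\omega : U \times p^{-1}(U) \to p^{-1}(U)\) satisfying (i) and (ii), and the formula \(\lambda(e,\alpha)(t) = \omega(\alpha(t),e)\) then gives a covering \(G\)-function for \(p|_{p^{-1}(U)}\); by Theorem~\ref{th-1} that restriction is a Hurewicz \(G\)-fibration, which is exactly the local fibration condition.

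Second, I would apply Theorem~\ref{th-4}, whose hypotheses match precisely: \(G\) is a compact group (the standing assumption) and \(B\) is paracompact, so the equivalence ``local Hurewicz \(G\)-fibration \(\Longleftrightarrow\) Hurewicz \(G\)-fibration'' is available. Feeding the output of the first step into this equivalence yields that \(p : E \to B\) is a Hurewicz \(G\)-fibration, which is the assertion.

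Since the argument is a two-line chaining of Lemma~\ref{lemm-1} with Theorem~\ref{th-4}, there is essentially no obstacle at the level of Theorem~\ref{th-7} itself; the real content lies upstream, in the explicit covering function of Lemma~\ref{lemm-1} and, more substantially, in the patching of extended covering \(G\)-functions over the sets \(V_{i_1\ldots i_k}\) used in the proof of Theorem~\ref{th-06}, which underpins Theorem~\ref{th-4}. The one point I would verify is that paracompactness of \(B\) enters only through Lemma~\ref{lemm-2} (existence of a locally finite normal \(G\)-cover of \(B\)), so that no extra separation hypothesis beyond what is stated is needed.
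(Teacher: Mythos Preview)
Your proposal is correct and mirrors the paper's own argument exactly: the paper states that Theorem~\ref{th-7} follows immediately from Lemma~\ref{lemm-1} and Theorem~\ref{th-4}, which is precisely the two-step chaining you outline. Your additional remarks about where the real content lies (the explicit covering \(G\)-function in Lemma~\ref{lemm-1} and the patching argument of Theorem~\ref{th-06} behind Theorem~\ref{th-4}) are accurate commentary but not needed for the deduction itself.
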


The converse assertion is valid under sufficiently weak restrictions imposed on the \( G \)-space \( B \) (see Theorem~\ref{th-3}).

\begin{definition}\label{def-EULC}
A \( G \)-space \( X \) is said to be \textit{equivariantly uniformly locally retractable} if for an arbitrary point \( x_0 \in X \), there exist an invariant neighborhood \( U \) of this point and an equivariant mapping \( \sigma : U \times U \times I \to X \) such that the following conditions are fulfilled:  

(i) \( \sigma(x, y, 0) = x \) and \( \sigma(x, y, 1) = y \) for all \((x, y) \in U \times U\),  

(ii) \( \sigma(x, x, t) = x \) for all \( x \in U \) and \( t \in I \).
\end{definition}

The following lemma shows that there exist many equivariantly uniformly locally retractable \( G \)-spaces.

\begin{lemma}\label{lem-1}
An arbitrary \( G \)-ANR-space is equivariantly uniformly locally retractable.
\end{lemma}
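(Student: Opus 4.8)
The plan is to reduce the statement to a well-known nonequivariant fact by working inside an invariant chart. Let $X$ be a $G$-ANR. By definition, $X$ is an equivariant neighborhood retract of an invariant subset of some normed linear $G$-space (or, following the conventions of \cite{James-Segal,tom Dieck_1}, of some $G$-space built from linear pieces); more convenient here is the characterization that $X$ is an equivariant ANR if and only if it is locally equivariantly contractible in a uniform sense, but to keep the argument self-contained I will use only the retraction property. Fix $x_0 \in X$. There is an invariant open set $W$ of a linear $G$-space $L$ (on which $G$ acts linearly, so that convex combinations are equivariant), an invariant open neighborhood $O$ of $x_0$ in $X$, an equivariant embedding $j : O \hookrightarrow W$, and an equivariant retraction $r : W' \to j(O)$ defined on an invariant open $W' \subseteq W$ with $j(O) \subseteq W'$.

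First I would choose the chart carefully. Since $W'$ is open in $L$ and $G$ acts linearly, for the point $j(x_0)$ there is an $\varepsilon$-ball $B_\varepsilon(j(x_0))$ contained in $W'$; its "invariant hull" $\bigcap_{g\in G} gB_\varepsilon(j(x_0))$ is an invariant open convex neighborhood of $j(x_0)$ (using that $G$ is compact and acts by isometries after averaging the norm, or directly that a finite-in-the-relevant-sense intersection of convex sets is convex), call it $C$. Let $U = j^{-1}(C) \cap O$, an invariant open neighborhood of $x_0$ in $X$. The key point is that for $x,y \in U$ the straight-line path $t \mapsto (1-t)\,j(x) + t\,j(y)$ stays in the convex invariant set $C \subseteq W'$, so we may apply $r$.

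Next I would simply write down $\sigma$. Define
\[
\sigma : U \times U \times I \to X, \qquad
\sigma(x,y,t) = j^{-1}\bigl(r\bigl((1-t)\,j(x) + t\,j(y)\bigr)\bigr).
\]
This is well defined because the line segment lies in $C \subseteq W'$ and $r$ maps $W'$ into $j(O)$, so $j^{-1}$ is applicable and lands in $O \subseteq X$. It is equivariant: $j$ and $r$ are equivariant, the affine combination is equivariant since $G$ acts linearly on $L$, and $j^{-1}$ is equivariant on its domain. For condition (i): at $t=0$ the argument of $r$ is $j(x) \in j(O)$, which $r$ fixes, so $\sigma(x,y,0) = j^{-1}(j(x)) = x$; similarly $\sigma(x,y,1) = y$. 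For condition (ii): when $x = y$ the affine combination equals $j(x)$ for every $t$, again fixed by $r$, so $\sigma(x,x,t) = x$.

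The main obstacle I anticipate is purely a matter of bookkeeping about the ambient model for $G$-ANR's: one must make sure the linear $G$-space in which $X$ sits admits an invariant convex neighborhood basis at each point, which for compact $G$ follows by averaging the metric (Haar integral, exactly as in Lemma~\ref{lemm-2}) so that $G$ acts by linear isometries and metric balls centered at fixed-type points can be invariantized while staying convex; if instead one starts from the Arens--Eells style embedding into a normed $G$-space this is immediate. Once a convex invariant chart is in hand, the verification above is routine, so the proof is short.

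\begin{proof}
Let $X$ be a $G$-ANR and $x_0 \in X$. Realize $X$ as an invariant closed subset of a normed linear $G$-space $L$ on which $G$ acts by linear isometries (averaging the norm by the Haar integral, as in Lemma~\ref{lemm-2}), and let $r : W' \to X$ be an equivariant retraction of an invariant open neighborhood $W'$ of $X$ in $L$ onto $X$. Choose $\varepsilon > 0$ with the open ball $B_\varepsilon(x_0) \subseteq W'$; since $G$ acts by isometries and $x_0$'s orbit is bounded, the set
\[
C = \bigcap_{g \in G} g\,B_\varepsilon(x_0)
\]
is an invariant, open, convex neighborhood of $x_0$ contained in $W'$. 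Put $U = X \cap C$, an invariant open neighborhood of $x_0$ in $X$.

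Define $\sigma : U \times U \times I \to X$ by
\[
\sigma(x,y,t) = r\bigl((1-t)x + ty\bigr).
\]
For $x,y \in U$ the segment $\{(1-t)x+ty : t \in I\}$ lies in the convex set $C \subseteq W'$, so $\sigma$ is well defined, and it is continuous. It is equivariant: for $g \in G$, using that $g$ acts linearly on $L$ and equivariance of $r$,
\[
\sigma(gx,gy,t) = r\bigl((1-t)gx + tgy\bigr) = r\bigl(g((1-t)x+ty)\bigr) = g\,r\bigl((1-t)x+ty\bigr) = g\,\sigma(x,y,t).
\]
Finally, $\sigma(x,y,0) = r(x) = x$ and $\sigma(x,y,1) = r(y) = y$ since $r$ fixes points of $X$, which gives (i); and $\sigma(x,x,t) = r(x) = x$ for all $t \in I$, which gives (ii). Hence $X$ is equivariantly uniformly locally retractable.
\end{proof}
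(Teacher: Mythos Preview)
The paper leaves this proof to the reader, so there is no argument of the paper's to compare with; I can only evaluate yours. The construction of $C=\bigcap_{g\in G}gB_\varepsilon(x_0)$ is where things break. You choose $\varepsilon>0$ only so that $B_\varepsilon(x_0)\subseteq W'$ and then assert that $C$ is a neighborhood of $x_0$; but $x_0\in C$ means $\|gx_0-x_0\|<\varepsilon$ for every $g$, i.e.\ $\operatorname{diam}(Gx_0)<\varepsilon$, and nothing in your setup guarantees this. The parenthetical ``since \dots\ $x_0$'s orbit is bounded'' does not help: boundedness is automatic for compact $G$; what is needed is that the orbit be \emph{small} relative to the thickness of $W'$.

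In fact the straight-line idea cannot be rescued by a cleverer choice of $\varepsilon$, of $W'$, or of the embedding. Take $G=S^1$ acting on $L=\mathbb{C}^2$ by the Hopf action $\lambda\cdot(z,w)=(\lambda z,\lambda w)$ and $X=S^3\subset L$, a $G$-ANR. The action on $S^3$ is free, so for any equivariant retraction $r:W'\to S^3$ one has $0\notin W'$ (otherwise $r(0)$ would be an $S^1$-fixed point of $S^3$, and there are none). Every invariant neighborhood $U\subseteq S^3$ of $x_0=(1,0)$ contains the whole orbit and in particular $(-1,0)=(-1)\cdot x_0$; the segment from $(1,0)$ to $(-1,0)$ passes through $0\notin W'$, so your formula $\sigma(x,y,t)=r\bigl((1-t)x+ty\bigr)$ is undefined at $\bigl((1,0),(-1,0),\tfrac12\bigr)$. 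The obstruction is intrinsic to the equivariant setting: nonequivariantly one shrinks $U$ toward a single point, but here $U$ must contain an entire orbit, whose convex hull need not lie in any retracting neighborhood of $X$. A correct argument must build $\sigma$ without linear interpolation between far-apart points of the same orbit.
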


The proof of this lemma is simple and is left to the reader.

In particular, Lemma \ref{lem-1} implies that \( G \)-CW-complexes are equivariantly uniformly locally retractable \( G \)-spaces.

\begin{theorem}\label{th-2}
Let \( B \) be an equivariantly uniformly locally retractable \( G \)-space. Then \( p : E \to B \) is a local Hurewicz \( G \)-fibration if and only if \( p : E \to B \) is a weakly locally trivial \( G \)-fibration.
\end{theorem}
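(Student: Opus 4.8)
The plan is to prove the two implications of the equivalence separately. The direction ``weakly locally trivial $G$-fibration $\Rightarrow$ local Hurewicz $G$-fibration'' requires no new work: it is exactly Lemma~\ref{lemm-1}, and it does not use the hypothesis on $B$. So the entire task is the converse: assuming $p$ is a local Hurewicz $G$-fibration and $B$ is equivariantly uniformly locally retractable, I must produce, around each point, the invariant neighbourhood and the equivariant map $\omega$ required by Definition~\ref{def-1}.

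Fix $b_0\in B$. By Definition~\ref{def-EULC} there are an invariant open $U\ni b_0$ and an equivariant $\sigma:U\times U\times I\to B$ with $\sigma(x,y,0)=x$, $\sigma(x,y,1)=y$ and $\sigma(x,x,t)=x$. Since $p$ is a local Hurewicz $G$-fibration, there is an invariant open $V\ni b_0$ such that $p|_{p^{-1}(V)}:p^{-1}(V)\to V$ is a Hurewicz $G$-fibration. As $\sigma(b_0,b_0,\cdot)$ is the constant path at $b_0\in V$, continuity of $\sigma$ and compactness of $I$ give a neighbourhood of $b_0$ whose associated $\sigma$-paths stay inside $V$; shrinking it and (since $G$ is compact) replacing it by an invariant one, I obtain an invariant open $W$ with $b_0\in W\subseteq U\cap V$ and $\sigma(W\times W\times I)\subseteq V$. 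By Theorem~\ref{th-1} applied to $p|_{p^{-1}(V)}$ there is an equivariant covering function for it; the crucial extra point is that I want to take this covering function $\lambda$ to be \emph{regular}, i.e.\ $\lambda(e,\alpha)$ is the constant path at $e$ whenever $\alpha$ is the constant path at $p(e)$. Granting such a $\lambda$, define
\[
\omega:W\times p^{-1}(W)\longrightarrow p^{-1}(W),\qquad
\omega(b,e)=\lambda\bigl(e,\,\sigma(p(e),b,\cdot)\bigr)(1),
\]
where $\sigma(p(e),b,\cdot)$ denotes the path $t\mapsto\sigma(p(e),b,t)$. By the choice of $W$ this path lies in $V^I$, it starts at $p(e)$, so the pair $(e,\sigma(p(e),b,\cdot))$ lies in the domain of $\lambda$, and it ends at $b$, so $\omega(b,e)\in p^{-1}(b)\subseteq p^{-1}(W)$.

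It remains to verify the properties of $\omega$. It is equivariant because $\sigma$ and $\lambda$ are (and the $G$-action on path spaces is pointwise). Condition (i) of Definition~\ref{def-1} holds because $p\circ\lambda(e,\alpha)=\alpha$, so $p\circ\omega(b,e)=\sigma(p(e),b,1)=b$. For condition (ii), put $b=p(e)$: by Definition~\ref{def-EULC}(ii) the path $\sigma(p(e),p(e),\cdot)$ is the constant path at $p(e)$, so regularity of $\lambda$ makes $\lambda(e,\sigma(p(e),p(e),\cdot))$ the constant path at $e$, whence $\omega(p(e),e)=e$. Thus $p$ is a weakly locally trivial $G$-fibration, which completes the proof.

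The main obstacle is the step that produces a \emph{regular} equivariant covering function for $p|_{p^{-1}(V)}$. Nonequivariantly it is classical that every Hurewicz fibration admits a regular lifting function, and the standard construction is natural in the fibration, hence carries over to the equivariant setting; this is the one ingredient not already isolated as a lemma in the paper, and it is exactly what makes condition (ii) hold on the nose rather than merely up to an equivariant fibrewise homotopy (with an arbitrary covering function one only gets that the self-map $e\mapsto\omega(p(e),e)$ of $p^{-1}(W)$ is equivariantly fibrewise homotopic to the identity). A secondary technical point is the passage to an \emph{invariant} $W$ with $\sigma(W\times W\times I)\subseteq V$, which is where compactness of $G$ enters; when $B$ is moreover paracompact one can sidestep the delicacy of the purely local fibration structure altogether by first upgrading $p$ to a global Hurewicz $G$-fibration via Theorem~\ref{th-4} and working with a global covering function.
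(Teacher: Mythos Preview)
Your proof follows the paper's approach: one direction is Lemma~\ref{lemm-1}, and for the converse you set $\omega(b,e)=\lambda\bigl(e,\sigma(p(e),b,\cdot)\bigr)(1)$ using the local covering $G$-function $\lambda$ together with the contracting homotopy $\sigma$, exactly as the paper does (with the same compactness argument to shrink the neighbourhood so that the $\sigma$-paths land in $V$). You are in fact more careful than the paper on one point: verifying condition~(ii) of Definition~\ref{def-1} genuinely requires $\lambda(e,c_{p(e)})$ to be the constant path at $e$, a regularity property the paper's ``it is easy to verify'' passes over in silence, and your explicit appeal to a \emph{regular} equivariant covering function is precisely what is needed to close that gap.
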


\begin{proof}
The sufficiency without any additional restrictions for the \( G \)-space \( B \) was proved in Lemma \ref{lemm-1}.

Prove the necessity. Let \( p : E \to B \) be a local Hurewicz \( G \)-fibration. For an arbitrary point \( b_0 \in B \), consider an equivariant neighborhood \( V \) of the point \( b_0 \) such that  
\[
p|_{p^{-1}(V)} : p^{-1}(V) \to V
\]  
is a Hurewicz \( G \)-fibration. Let \(\lambda : \Delta \to (p^{-1}(V))^I\) be a covering \( G \)-function of the mapping \( p|_{p^{-1}(V)} \). Since \( B \) is an equivariantly uniformly locally retractable \( G \)-space, there exist an invariant neighborhood \( U \) of the point \( b_0, U \subset V \), and an equivariant mapping \(\sigma : U \times U \times I \to B\) such that the conditions \(\sigma(b, b', 0) = b, \sigma(b, b', 1) = b'\) and \(\sigma(b, b, t) = b\) are fulfilled for all \(b, b' \in U\) and \(t \in I\). Moreover, the invariant neighborhood \(U\) can be chosen in such a way that \(\sigma(U \times U \times I) \subset V\): it suffices to note that \((b_0, b_0, I) \in \sigma^{-1}(V)\) and apply the compactness of \(I\).

Now let \((b, e) \in U \times p^{-1}(U)\). Consider the path \(\tilde{\alpha} \in B^I\) defined by the formula \(\tilde{\alpha}(t) = \sigma(p(e), b, t)\). Note that \(\tilde{\alpha}\) joins the point \(p(e)\) with the point \(b\). Introduce the equivariant mapping
\[
\omega : U \times p^{-1}(U) \to p^{-1}(U), \quad \omega(b, e) = \lambda(e, \tilde{\alpha})(1).
\]
It is easy to verify that \(\omega\) satisfies the conditions (i) and (ii) of Definition \ref{def-1}. 
\end{proof}

Theorems \ref{th-4} and \ref{th-2} immediately imply the following important theorem.

\begin{theorem}\label{th-3}
Let \(B\) be a paracompact, equivariantly uniformly locally retractable \(G\)-space. Then \(p : E \to B\) is a Hurewicz \(G\)-fibration if and only if \(p : E \to B\) is a weakly locally trivial \(G\)-fibration.
\end{theorem}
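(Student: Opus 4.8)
The plan is to obtain Theorem~\ref{th-3} purely by chaining the two equivalences already established: Theorem~\ref{th-4}, which under the paracompactness hypothesis identifies Hurewicz \(G\)-fibrations with local Hurewicz \(G\)-fibrations, and Theorem~\ref{th-2}, which under the equivariant uniform local retractability hypothesis identifies local Hurewicz \(G\)-fibrations with weakly locally trivial \(G\)-fibrations. Since \(B\) is assumed to satisfy \emph{both} hypotheses, all three classes of equivariant mappings coincide, and in particular the first and the third do.

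I would then spell out each implication separately. For the forward direction, suppose \(p : E \to B\) is a Hurewicz \(G\)-fibration. Then it is, in particular, a local Hurewicz \(G\)-fibration: one may take the invariant neighbourhood of each point of \(B\) to be all of \(B\) (or simply invoke the trivial half of Theorem~\ref{th-4}). Applying the necessity part of Theorem~\ref{th-2}, which uses only that \(B\) is equivariantly uniformly locally retractable, we conclude that \(p\) is a weakly locally trivial \(G\)-fibration.

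For the converse, suppose \(p : E \to B\) is a weakly locally trivial \(G\)-fibration. By Lemma~\ref{lemm-1} (equivalently, the sufficiency part of Theorem~\ref{th-2}), which requires no restriction on \(B\), the mapping \(p\) is a local Hurewicz \(G\)-fibration; and since \(B\) is paracompact, Theorem~\ref{th-4} upgrades this to the statement that \(p\) is a global Hurewicz \(G\)-fibration. This settles both directions.

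There is no real obstacle here — the argument is just bookkeeping of the two hypotheses on \(B\), keeping track of the fact that paracompactness is needed only for the passage from local to global fibrations and equivariant uniform local retractability only for the passage from local fibrations to weakly locally trivial ones, while the reverse passage in each case is unconditional. The genuine mathematical work was already carried out in Theorems~\ref{th-06} and~\ref{th-4} (the equivariant Hurewicz local-to-global theorem, via the patching of extended covering \(G\)-functions along a locally finite normal \(G\)-cover of \(B^I\)) and in Theorem~\ref{th-2} (the construction of the local trivialization map \(\omega(b,e) = \lambda(e,\tilde\alpha)(1)\) from a covering \(G\)-function \(\lambda\) and the retraction \(\sigma\)).
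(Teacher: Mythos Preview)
Your proposal is correct and matches the paper's own argument exactly: the paper simply states that Theorems~\ref{th-4} and~\ref{th-2} immediately imply Theorem~\ref{th-3}, and your two directions just unpack this chain of equivalences. Nothing further is needed.
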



\section{\texorpdfstring{\( G \)-Fibrations}{G-Fibrations} with the property of uniqueness of a covering path}

We say that a mapping \(p : E \to B\) possesses the property of uniqueness of a covering path if for any two paths \(\alpha, \alpha' : I \to E\) satisfying the conditions \(\alpha(0) = \alpha'(0)\) and \(p \circ \alpha = p \circ \alpha'\), the equality \(\alpha = \alpha'\) holds.

Consider arbitrary \(G\)-spaces \(E\) and \(B\), where \(G\) is a compact topological group.

\begin{lemma}\label{lemma-2}
Let an equivariant mapping \(p : E \to B\) be a Hurewicz fibration with the property of uniqueness of a covering path. Then \(p : E \to B\) is a Hurewicz \(G\)-fibration.
\end{lemma}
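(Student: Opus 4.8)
The plan is to invoke Theorem~\ref{th-1}: it suffices to construct an equivariant covering function $\lambda : \Delta \to E^I$ for $p$, where $\Delta = \{(e,\alpha) \in E \times B^I \mid \alpha(0) = p(e)\}$. Since $p$ is a (nonequivariant) Hurewicz fibration, Theorem~\ref{th-1} applied in the nonequivariant setting (equivalently, by Corollary~\ref{cor-1} read in reverse) furnishes a covering function $\lambda_0 : \Delta \to E^I$ with $\lambda_0(e,\alpha)(0) = e$ and $p \circ \lambda_0(e,\alpha) = \alpha$; this $\lambda_0$ need not be equivariant. The idea is to \emph{average} or rather \emph{correct} $\lambda_0$ using the group action to obtain an equivariant one, and here the uniqueness of covering paths is what makes the correction canonical: there is essentially only one path over a given $\alpha$ starting at a given point, so equivariance becomes automatic.

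The key steps, in order, are as follows. First, fix $(e,\alpha) \in \Delta$ and $g \in G$. Consider the two paths $g \cdot \lambda_0(e,\alpha)$ and $\lambda_0(ge, g\alpha)$ in $E$. Both start at $ge$: indeed $(g\cdot\lambda_0(e,\alpha))(0) = g\cdot e = ge$ and $\lambda_0(ge,g\alpha)(0) = ge$, using that $(ge, g\alpha) \in \Delta$ because $(g\alpha)(0) = g\alpha(0) = g\,p(e) = p(ge)$ by equivariance of $p$. Second, observe that both project to the same path in $B$: $p\circ(g\cdot\lambda_0(e,\alpha)) = g\cdot(p\circ\lambda_0(e,\alpha)) = g\alpha$ by equivariance of $p$, and $p\circ\lambda_0(ge,g\alpha) = g\alpha$ by the covering-function property. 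Third, invoke the uniqueness of a covering path for $p$: two paths in $E$ with the same initial point and the same image under $p$ coincide, hence $g\cdot\lambda_0(e,\alpha) = \lambda_0(ge,g\alpha)$ for all $g \in G$ and all $(e,\alpha)\in\Delta$. This says precisely that $\lambda_0$ is already an equivariant mapping $\Delta \to E^I$ (the $G$-action on $E^I$ being the pointwise one). Fourth, conclude via Theorem~\ref{th-1} that $p : E \to B$ is a Hurewicz $G$-fibration.

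I do not expect a serious obstacle here: the argument is short and the uniqueness hypothesis does all the work, collapsing the usual averaging step into a tautology. The one point to handle with a little care is the continuity of $\lambda_0$ as a map into $E^I$ (equivalently as a map $\Delta \times I \to E$) --- but this is exactly what the nonequivariant version of Theorem~\ref{th-1} provides, so it can be cited rather than reproved. A secondary point worth a remark is that $p$ is assumed to be equivariant from the outset, so $E$ and $B$ carry $G$-actions and $p$ commutes with them; this is what legitimizes writing $p(ge) = g\,p(e)$ and $(g\alpha)(0) = p(ge)$, which are used twice above. With these observations in place the proof is complete.
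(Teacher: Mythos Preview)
Your proof is correct and follows essentially the same idea as the paper's: obtain a nonequivariant lift from the Hurewicz-fibration hypothesis, then use uniqueness of covering paths to force equivariance. The only cosmetic difference is that the paper verifies the $G$-covering homotopy property directly (lifting an arbitrary $F:X\times I\to B$ and checking that the resulting $\tilde F$ satisfies $\tilde F(gx,t)=g\tilde F(x,t)$), whereas you package the same computation through the covering-function characterization of Theorem~\ref{th-1}.
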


\begin{proof}
Let \(X\) be an arbitrary \(G\)-space and \(\tilde{f} : X \to E\) and \(F : X \times I \to B\) be arbitrary equivariant mappings satisfying the condition \(F(x, 0) = (p \circ \tilde{f})(x)\). Since \(p : E \to B\) is a Hurewicz fibration, there exists a homotopy \(\tilde{F} : X \times I \to E\) such that \(\tilde{F}(x, 0) = \tilde{f}(x)\) and \(p \circ \tilde{F} = F\). To complete the proof, it suffices to verify that the covering homotopy \(\tilde{F}\) is in fact an equivariant covering homotopy. Consider the paths \(\alpha, \alpha' : I \to E\) defined by the formulas
\[
\alpha(t) = \tilde{F}(gx, t), \quad \alpha'(t) = g\tilde{F}(x, t),
\]
where \(g \in G\), \(x \in X\) are arbitrary fixed elements. Note that
\[
\alpha(0) = \tilde{F}(gx, 0) = \tilde{f}(gx) = g\tilde{f}(x), \quad \alpha'(0) = g\tilde{F}(x, 0) = g\tilde{f}(x).
\]
Thus, \(\alpha(0) = \alpha'(0)\). Moreover,
\[
(p \circ \alpha)(t) = (p \circ \tilde{F})(gx, t) = F(gx, t) = gF(x, t),
\]
\[
(p \circ \alpha')(t) = p(g\tilde{F}(x, t)) = gp(\tilde{F}(x, t)) = g(p \circ \tilde{F})(x, t) = gF(x, t).
\]
Thus, \(p \circ \alpha = p \circ \alpha'\). Therefore, \(\alpha = \alpha'\), i.e., \(\tilde{F}(gx, t) = g\tilde{F}(x, t)\), since \(p : E \to B\) possesses the property of uniqueness of a covering path.
\end{proof}

Lemma \ref{lemma-2} and Corollary \ref{cor-1} immediately imply the following theorem.

\begin{theorem}\label{th-6}
An equivariant mapping \(p : E \to B\) with the property of uniqueness of a covering path is a Hurewicz \(G\)-fibration if and only if it is a Hurewicz fibration.
\end{theorem}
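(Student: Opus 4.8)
The plan is to deduce Theorem~\ref{th-6} directly from the two facts it is advertised to follow from, namely Lemma~\ref{lemma-2} and Corollary~\ref{cor-1}, so no new construction is needed. The statement is an ``if and only if'' for an equivariant mapping $p : E \to B$ that is assumed throughout to possess the property of uniqueness of a covering path: it asserts that such a $p$ is a Hurewicz $G$-fibration precisely when it is a Hurewicz fibration.

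For the forward implication, suppose $p$ is a Hurewicz $G$-fibration. Then Corollary~\ref{cor-1} (the special case $H$ trivial of Theorem~\ref{th-5-0}, obtained by regarding a covering $G$-function as an ordinary covering function by forgetting equivariance) immediately gives that $p$ is a Hurewicz fibration; the hypothesis on uniqueness of covering paths is not even needed here. For the converse, suppose $p$ is a Hurewicz fibration. Since $p$ also has the property of uniqueness of a covering path by the standing assumption of this section, Lemma~\ref{lemma-2} applies verbatim and yields that $p$ is a Hurewicz $G$-fibration.

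There is essentially no obstacle: the work has all been done in Lemma~\ref{lemma-2}, whose proof upgrades an arbitrary (non-equivariant) covering homotopy $\tilde F$ to an equivariant one by showing $\tilde F(gx,t)$ and $g\tilde F(x,t)$ are two covering paths of the same base path $gF(x,\cdot)$ starting at the same point $g\tilde f(x)$, hence equal by uniqueness. The only thing to check when writing Theorem~\ref{th-6}'s proof is that both halves of the biconditional are addressed and that the uniqueness hypothesis is correctly invoked only where it is actually used (in the converse, via Lemma~\ref{lemma-2}). I would phrase the proof in two short sentences, one per direction, citing Corollary~\ref{cor-1} and Lemma~\ref{lemma-2} respectively.

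\begin{proof}
If $p : E \to B$ is a Hurewicz $G$-fibration, then it is a Hurewicz fibration by Corollary~\ref{cor-1}. Conversely, if $p : E \to B$ is a Hurewicz fibration, then, since $p$ possesses the property of uniqueness of a covering path, it is a Hurewicz $G$-fibration by Lemma~\ref{lemma-2}.
\end{proof}
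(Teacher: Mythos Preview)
Your proof is correct and matches the paper's approach exactly: the paper states that Theorem~\ref{th-6} follows immediately from Lemma~\ref{lemma-2} and Corollary~\ref{cor-1}, and you have spelled out precisely those two one-line deductions, one per direction.
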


In other words, in the class of all equivariant mappings with the property of uniqueness of a covering path, the notion of Hurewicz \(G\)-fibration coincides with the notion of a Hurewicz fibration.

\end{document}